\documentclass[10pt,a4paper,oneside]{article}
\usepackage[english]{babel}
\usepackage{a4wide,amsmath,amsthm,amssymb,url,graphicx,xspace,algorithm,algorithmic,pgf}
\usepackage[latin1]{inputenc}

\def\F{\mathbb{F}}
\def\N{\mathbb{N}}

\DeclareMathOperator{\supp}{supp}
\DeclareMathOperator{\wt}{wt}
\DeclareMathOperator{\PG}{PG}

\DeclareMathOperator{\Aut}{Aut}

\theoremstyle{definition}
\newtheorem{theorem}{Theorem}[section]
\newtheorem{lemma}[theorem]{Lemma}
\newtheorem{definition}[theorem]{Definition}
\newtheorem{remark}[theorem]{Remark}

\newtheorem{notation}[theorem]{Notation}
\newtheorem{example}[theorem]{Example}

\newcommand{\comments}[1]{}

\author{M. De Boeck\footnote{The research of this author is supported by FWO-Vlaanderen (Research Foundation - Flanders). \newline Address: UGent, Department of Mathematics, Krijgslaan 281-S22, 9000 Gent, Flanders, Belgium. \newline Email address: \{mdeboeck,pv\}@cage.ugent.be.} { }and P. Vandendriessche\footnotemark[\value{footnote}]}
\title{On the dual code of points and generators on the Hermitian variety $\mathcal{H}(2n+1,q^{2})$}
\begin{document}
\maketitle

\begin{abstract}
  We study the dual linear code of points and generators on a non-singular Hermitian variety $\mathcal{H}(2n+1,q^2)$. We improve the earlier results for $n=2$, we solve the minimum distance problem for general $n$, we classify the $n$ smallest types of code words and we characterize all small weight code words as being a linear combination of these $n$ types.
\end{abstract}

\section{Preliminaries}

Over the last decades, several types of linear codes arising from finite geometries have been studied. The general concept is as follows: one considers the point set $\mathcal P$ of a geometric space or variety $\mathcal S$, together with a collection $\mathcal B$ of $k$-dimensional subspaces on $\mathcal S$. 

\begin{definition}
  The incidence matrix $A\in\{0,1\}^{\mathcal B\times\mathcal P}$ is the unique matrix such that $H_{b,p}=1$ if $p\in b$ and $H_{b,p}=0$ otherwise.
\end{definition}

Since $0$ and $1$ are elements of every field, one can use this matrix to construct subspaces of $\F_p^\mathcal P$, $p$ prime. There are two common ways to do this:
\begin{itemize}
  \item by considering the $\F_p$-row span of $A$, which is usually called \emph{the code generated by the $k$-spaces of $\mathcal S$} and denoted by $C_k(\mathcal S)$, and
  \item by considering the $\F_p$-null space of $A$, which is usually called \emph{the dual code of points and $k$-spaces of $\mathcal S$} and denoted by $C_k(\mathcal S)^\perp$.
\end{itemize}

\begin{remark}
  A \emph{code word} $c$ of $C_k(\mathcal S)^\perp$ is an element of the $\F_p$-null space of $A$, which is equivalent to a mapping from $\mathcal P$ to $\F_p$ with the additional property that $\sum_{p\in\pi}c_p=0$, for all $\pi\in\mathcal B$. Hence, code words can be studied as multisets of points such that each $k$-space on $\mathcal S$ contains $0\pmod p$ of the points in the multiset.
\end{remark}

These codes have primarily been studied for affine and projective spaces (see \cite{ak,fcvdt, conm} for an overview of relevant results), and more recently these codes have been investigated for quadrics, Hermitian varieties and generalized quadrangles \cite{dmm,psv,vdd1,vdd2}. In \cite{psv}, the dual code of points and $n$-spaces of the Hermitian variety $\mathcal{H}(2n+1,q^2)$ is studied. We will first introduce the basic definitions of Hermitian varieties, then we state the known results on the Hermitian variety code at hand, and finally we will state our main result.

From now on, let $\PG(m,q^{2})$ be the $m$-dimensional projective space over the finite field $\F_{q^{2}}$. A \emph{non-singular Hermitian variety} is the set of absolute points of a Hermitian polarity, which is defined by a Hermitian matrix and the non-trivial involution $x\mapsto x^{q}$ of $\Aut(\F_{q^{2}})$. All non-singular Hermitian varieties are projectively equivalent to the one given by the equation
\[
  X^{q+1}_{0}+X^{q+1}_{1}+\cdots+X^{q+1}_{m}=0\,.
\]
The projective index of a non-singular Hermitian variety in $\PG(m,q^{2})$ equals $\left\lfloor\frac{m-1}{2}\right\rfloor$. The maximal subspaces of a Hermitian variety are called \emph{generators}. From now on, we will denote the standard non-singular Hermitian variety in $\PG(m,q^{2})$ by $\mathcal{H}(m,q^{2})$. A \emph{singular Hermitian variety} in $\PG(m,q^{2})$ is a cone with an $i$-dimensional subspace as vertex and a non-singular Hermitian variety in an $(m-i-1)$-dimensional subspace, disjoint from the vertex, as base, $-1\leq i\leq m$. All singular Hermitian varieties in $\PG(m,q^{2})$ with an $i$-dimensional vertex are projectively equivalent to the one given by the equation
\[
  X^{q+1}_{0}+X^{q+1}_{1}+\cdots+X^{q+1}_{m-i-1}=0\,.
\]
Note that a non-singular Hermitian variety is a singular Hermitian variety with vertex dimension equal to $-1$.

\begin{lemma}\label{basiccounting}
  The number of generators on $\mathcal{H}(2n+1,q^{2})$ is $\prod^{n}_{i=0}(q^{2i+1}+1)$.
\end{lemma}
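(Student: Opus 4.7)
The plan is a standard double-counting induction on $n$. Let $g_n$ denote the number of generators of $\mathcal{H}(2n+1,q^2)$; I want to show $g_n = (q^{2n+1}+1)\, g_{n-1}$ and then check the base case.

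First I would recall the standard point-count: $\mathcal{H}(2n+1,q^2)$ contains $\frac{(q^{2n+1}+1)(q^{2n+2}-1)}{q^2-1}$ points, and each generator, being a projective $n$-space over $\F_{q^2}$, contains $\frac{q^{2n+2}-1}{q^2-1}$ points. The key geometric input is the structure of the tangent hyperplane at a point $P\in\mathcal{H}(2n+1,q^2)$: it meets $\mathcal{H}(2n+1,q^2)$ in a singular Hermitian cone with vertex $P$ over a non-singular $\mathcal{H}(2n-1,q^2)$ in a complementary $(2n-1)$-space. Since every generator through $P$ lies in this tangent hyperplane and contains $P$, such a generator corresponds bijectively (by projecting from $P$) to a generator of the base $\mathcal{H}(2n-1,q^2)$. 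Hence the number of generators through a fixed point equals $g_{n-1}$.

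Now I would double-count incident (point, generator) pairs:
\[
  g_n\cdot\frac{q^{2n+2}-1}{q^2-1} \;=\; \frac{(q^{2n+1}+1)(q^{2n+2}-1)}{q^2-1}\cdot g_{n-1},
\]
which simplifies to $g_n = (q^{2n+1}+1)\,g_{n-1}$. The base case $n=0$ is $\mathcal{H}(1,q^2)$, a Hermitian curve on a projective line, whose generators are its $q+1$ points, giving $g_0=q+1$. Iterating the recurrence yields $g_n = \prod_{i=0}^n (q^{2i+1}+1)$, as claimed.

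The only potentially delicate step is the identification of generators through $P$ with generators of $\mathcal{H}(2n-1,q^2)$; this rests on the fact that the tangent hyperplane at $P$ is a Hermitian cone and that maximal totally isotropic subspaces through the vertex of such a cone are precisely the joins of the vertex with maximal totally isotropic subspaces of the base. This is a well-known structural property of Hermitian cones, so it can be invoked without further proof.
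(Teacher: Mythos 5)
Your argument is correct, but it is genuinely different from what the paper does: the paper does not prove this lemma at all, it simply cites \cite[Chapter 23]{ht}. Your double count is sound. The point count $\mu_{2n+1}(q^2)=\frac{(q^{2n+2}-1)(q^{2n+1}+1)}{q^2-1}$ and the count $\theta_n(q^2)=\frac{q^{2n+2}-1}{q^2-1}$ of points per generator are right; the identification of generators through a fixed point $P$ with generators of the base $\mathcal{H}(2n-1,q^2)$ of the tangent cone $P^\sigma\cap\mathcal{H}(2n+1,q^2)$ is legitimate (every generator $\mu$ through $P$ satisfies $\mu=\mu^\sigma\subseteq P^\sigma$, and maximal totally isotropic subspaces through the vertex of the cone are exactly the joins of the vertex with generators of the base); and the base case $g_0=q+1$ for $\mathcal{H}(1,q^2)$ is correct since there the generators are the $q+1$ points themselves. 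The recurrence $g_n=(q^{2n+1}+1)g_{n-1}$ then gives the product formula. It is worth noting that the projection argument you single out as the delicate step is precisely the mechanism the paper itself uses later, in Theorem \ref{basiccounting2}, to count generators through a fixed $k$-space (the case $k=0$ of that theorem is your ``generators through a point'' count). So your proof buys self-containedness at essentially no cost: it reuses a tool the paper needs anyway, whereas the paper outsources the lemma to the literature.
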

\begin{proof}
  This, and many other results on Hermitian varieties, can be found in \cite[Chapter 23]{ht}.
\end{proof}

\begin{notation}
  Throughout this article, we will denote the number of points in $\PG(m,q)$ by $\theta_{m}(q)=\frac{q^{m+1}-1}{q-1}$ and the number of points on $\mathcal{H}(m,q^{2})$ by $\mu_{m}(q^{2})=\frac{(q^{m+1}-(-1)^{m+1})(q^{m}-(-1)^{m})}{q^{2}-1}$.
\end{notation}

\begin{definition}
  The \emph{support} of a code word $c$ is the set of positions with nonzero entry, i.e. $\supp(c)=\{p:c_p\neq 0\}\subseteq\mathcal P$. Note that we identify the set of positions $\supp(c)$ with a set of points of $\mathcal{P}$ using the correspondence between those. The \emph{Hamming weight} $\wt(c)$ of a code word $c$ is the number of nonzero symbols in it, i.e. it is $|\supp(c)|$. The minimum distance $d$ of $C_k(\mathcal S)^\perp$ is $\min_{c\in C_k(\mathcal S)^\perp\setminus\{0\}} \wt(c)$.
\end{definition}

The following theorems on $C_n(\mathcal{H}(2n+1,q^2))^\perp$ are known.

\begin{theorem}[{\cite[Proposition 3.7]{kimms}}]\label{brol}
  Let $n=1$. Then the supports of all code words $c$ with $0<\wt(c)<3q$ are projectively equivalent, and their Hamming weights are $2(q+1)$. If $\wt(c)\leq \frac{\sqrt{q}(q+1)}{2}$, then $c$ is a linear combination of these code words.
\end{theorem}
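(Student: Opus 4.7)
The plan is to prove the theorem in three stages: construct minimum weight codewords explicitly, classify the supports of all codewords of weight $<3q$, and then handle the linear combination statement by a weight-reduction argument.

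For the construction, let $m$ be a secant line to $\mathcal{H}$ in $\PG(3,q^2)$; its polar $m^\perp$ under the Hermitian polarity is also secant, and is disjoint from $m$ since $m$ is not tangent. Write $B := m\cap\mathcal{H}$ and $B' := m^\perp\cap\mathcal{H}$, each a Baer subline of $q+1$ points, and define $c_m$ to be $+1$ on $B$, $-1$ on $B'$, and $0$ elsewhere. For any $P\in B$, the inclusion $P\in m$ forces $m^\perp\subseteq P^\perp = T_P\mathcal{H}$, so the tangent plane at $P$ contains both $P$ and the line $m^\perp$; the $q+1$ generators through $P$ lie in this plane and each meets $m^\perp$ in a unique point of $\mathcal{H}$, i.e.\ of $B'$, and this correspondence is bijective. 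Consequently every generator meets $B\cup B'$ in exactly $0$ or $2$ points---one from each half---so $c_m$ is a codeword of weight $2(q+1)$. Projective equivalence of the supports follows from the transitivity of the stabilizer of $\mathcal{H}$ in $\PGammaL(4,q^2)$ on secant lines, a consequence of Witt's theorem.

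For the characterization, let $c\neq 0$ have support $S$ with $w := |S| < 3q$. Since $\sum_{P\in g}c(P) = 0$, no generator meets $S$ in exactly one point; each of the $q+1$ generators through any $P\in S$ therefore contains a further point of $S$, and since two distinct generators through $P$ share only $P$, this yields $w\geq q+2$. Double-counting generator-point incidences, using $|g\cap S|\geq 2$ whenever $g$ meets $S$ combined with the hypothesis $w<3q$, forces $w=2(q+1)$ and $|g\cap S|\in\{0,2\}$ for every generator. For $p>2$, the sum condition then makes the bisecant-incidence graph on $S$ bipartite, with parts $S_\pm$ of size $q+1$ on which $c$ takes opposite values; as each $P\in S_+$ is joined by a generator to every point of $S_-$, one has $S_-\subseteq P^\perp$ for all $P\in S_+$, hence $S_-\subseteq\langle S_+\rangle^\perp$. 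A dimension count then forces $\langle S_+\rangle$ to be a secant line $m$, and $S_+ = m\cap\mathcal{H}$, $S_- = m^\perp\cap\mathcal{H}$, recovering the construction; the case $p=2$ admits a parallel argument using the same incidence regularity without a sign bipartition.

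For the linear combination statement, I plan a weight-reduction argument: given $c$ of weight at most $\tfrac{\sqrt{q}(q+1)}{2}$, I locate a point $P\in\supp(c)$ together with a basic codeword $c_m$ having $P$ in its support such that $c-\lambda c_m$ has strictly smaller weight for an appropriate $\lambda\in\F_p^\times$, and iterate until reaching zero. The threshold $\tfrac{\sqrt{q}(q+1)}{2}$ is precisely what a counting estimate demands to guarantee that such a secant $m$ exists at every step. The main obstacle is the characterization: sharpening the elementary bound $w\geq q+2$ all the way to the exact value $2(q+1)$ and then extracting the polar-pair structure of $S$ from the resulting $K_{q+1,q+1}$-like incidence pattern requires careful exploitation of the generalized quadrangle structure of $\mathcal{H}(3,q^2)$ and the Hermitian polarity.
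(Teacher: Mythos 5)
First, a point of reference: the paper does not prove this statement at all --- it is quoted verbatim from \cite[Proposition 3.7]{kimms} and used as a black box, so there is no internal proof to compare against. Your proposal therefore has to stand on its own, and while your first stage (the construction of the weight-$2(q+1)$ codewords from a secant line $m$ and its polar $m^{\sigma}$, together with Witt transitivity on secant lines) is correct and coincides with the paper's general construction in Theorem \ref{codewords} and Example \ref{possibilities} specialised to $n=1$, $i=-1$, the two remaining stages contain genuine gaps rather than proofs.

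The central gap is in the characterization. You assert that ``double-counting generator-point incidences, using $|g\cap S|\geq 2$ whenever $g$ meets $S$ combined with the hypothesis $w<3q$, forces $w=2(q+1)$ and $|g\cap S|\in\{0,2\}$.'' It does not: the straightforward double count (each of the $q+1$ generators through a point of $S$ carries a second point of $S$, and distinct generators through a point meet only there) yields only $w\geq q+2$, as you yourself computed, and no purely numerical incidence count closes the interval $[q+2,3q)$ down to the single value $2q+2$. What is actually needed is the generalized quadrangle structure of $\mathcal{H}(3,q^2)$ used concretely: for instance, since a GQ contains no triangles off a line, a generator carrying $k\geq 3$ points of $S$ forces $w\geq k(q+1)\geq 3q+3$, which is how $w<3q$ eliminates all but $0$- and $2$-secant generators; and even after that one must still exclude weights strictly between $2q+2$ and $3q$ for a $(q+1)$-regular, bipartite collinearity graph on $S$, which requires the trace/span regularity of the GQ, not just degree counting. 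Your closing sentence concedes exactly this (``the main obstacle is the characterization \dots requires careful exploitation of the generalized quadrangle structure''), i.e., the decisive step is announced but not carried out. There is also a circularity to watch: your claim that each $P\in S_{+}$ is collinear with \emph{every} point of $S_{-}$ follows from degree counting only once $|S_{\pm}|=q+1$ is known, which presupposes $w=2q+2$. Finally, the third stage (the linear-combination statement up to weight $\tfrac{\sqrt{q}(q+1)}{2}$, a range far exceeding $3q$) is stated purely as a plan --- no counting estimate is given to guarantee that a reducing codeword $c_m$ exists at each step, nor that the weight strictly decreases --- so this part, too, remains unproved. The weight-reduction strategy itself is sound and is the one the paper uses for general $n$ in Theorem \ref{maintheorema}, but as written your proposal establishes only the existence and equivalence of the weight-$2(q+1)$ codewords, not the two classification claims.
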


\begin{theorem}[{\cite[Theorem 43]{psv}}]
  Let $n=2$. If $c$ is a code word with $\wt(c)\le 2(q^{3}+q^2)$ and if $q$ is sufficiently large, then there are only two possible projective equivalence classes for $\supp(c)$, and the Hamming weights of the corresponding code words are $2(q^3+1)$ and $2(q^3+q^2)$. These two types of code words are examples of the code words constructed in Theorem \ref{codewords}.
\end{theorem}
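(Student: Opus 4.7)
The plan is to analyze a code word $c$ with $\wt(c)\le 2(q^3+q^2)$ by combining local reduction to the $n=1$ case with global incidence counting. The central tool is the observation that for any point $P\in\mathcal{H}(5,q^2)$, the tangent hyperplane $T_P=P^\perp$ meets $\mathcal{H}(5,q^2)$ in a cone with vertex $P$ over a non-singular $\mathcal{H}(3,q^2)$ in a complementary $3$-space, and the generators of $\mathcal{H}(5,q^2)$ through $P$ correspond bijectively with the generators (lines) of this $\mathcal{H}(3,q^2)$. Projecting $\supp(c)\cap T_P\setminus\{P\}$ from $P$ therefore produces a multiset on $\mathcal{H}(3,q^2)$ whose intersection with every line generator sums to $0\pmod{p}$, i.e.\ a code word of $C_1(\mathcal{H}(3,q^2))^\perp$.

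Second, I would apply Theorem~\ref{brol} to each such projected code word: either the projection vanishes, or its weight is at least $2(q+1)$, and if small it is a linear combination of the classified minimum-weight code words on $\mathcal{H}(3,q^2)$. Iterating this construction through several well-chosen points $P\in\supp(c)$ puts rigid local constraints on $\supp(c)$: each such local datum gives linear and congruence relations between the values $c_Q$ on points $Q$ meeting fixed generators through $P$. Points $Q\notin\supp(c)$ are also exploited: each generator through $Q$ meets $\supp(c)$ in a $0\pmod{p}$ weighted multiset, so standard double-counting of $(Q,\pi)$ incidences with $Q\notin\supp(c)$ and $\pi$ a generator on $\supp(c)$ produces lower bounds on $\wt(c)$ unless $\supp(c)$ is highly structured.

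Third, I would identify the two extremal configurations. Two disjoint Hermitian curves $\mathcal{H}(2,q^2)$ embedded suitably in $\mathcal{H}(5,q^2)$ (for instance, in a pair of conjugate Baer subgeometries of a plane), each carrying opposite $\pm 1$ weights, provide a code word of weight $2(q^3+1)$; a second construction obtained by combining the classical $n=1$ pattern with a generator in the fifth dimension gives weight $2(q^3+q^2)$. I would check directly that both examples are code words, match the claimed weights, and agree with the constructions of Theorem~\ref{codewords}. Showing uniqueness up to projective equivalence then reduces to classifying the extremal configurations that match the local data derived above.

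The main obstacle is the \emph{stability} step: proving that the local data force $\supp(c)$ into exactly one of these two equivalence classes, and no other configuration can fit under the weight hypothesis $\wt(c)\le 2(q^3+q^2)$. This is where the assumption ``$q$ sufficiently large'' is essential, since the counting inequalities on which the case analysis relies carry lower-order error terms that only dominate when $q$ is large; for small $q$ sporadic structured multisets could a priori interfere. Concretely, once a projected code word of type $2(q+1)$ is detected from some point $P$, I would extrapolate its preimage across all generators through $P$, bound the additional points needed to maintain the $0\pmod p$ condition on the remaining generators, and argue that any deviation from the two canonical configurations forces an extra $\Omega(q^3)$ points, violating the weight bound.
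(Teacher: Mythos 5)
First, a structural remark: this theorem is quoted from \cite[Theorem 43]{psv} and the paper gives no proof of it, so there is no internal proof to compare against; I can only assess your sketch on its own terms. Its first step already contains an error. If $P\in\supp(c)$ and you project $\supp(c)\cap P^{\sigma}\setminus\{P\}$ from $P$ onto the base $\mathcal{H}(3,q^{2})$ of the cone $P^{\sigma}\cap\mathcal{H}(5,q^{2})$, the resulting weighted point set is \emph{not} in $C_{1}(\mathcal{H}(3,q^{2}))^{\bot}$: a generator $\mu$ through $P$ satisfies $\sum_{Q\in\mu}c_{Q}=0$, so the projected weights on the corresponding line of $\mathcal{H}(3,q^{2})$ sum to $-c_{P}\neq 0$ for \emph{every} line. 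Hence Theorem \ref{brol} cannot be applied to the projection, and your dichotomy (``the projection vanishes or has weight at least $2(q+1)$'') fails precisely at the points you propose to iterate over. What this projection actually yields is that the projected support is a blocking set of the line generators of $\mathcal{H}(3,q^{2})$, hence has at least $q^{3}+1$ points; that is exactly the content of Theorem \ref{pperpcapS}, a lower bound rather than a classification. Your description of the extremal examples is also off: the weight-$2(q^{3}+1)$ code word is supported on the two Hermitian curves $\pi\cap\mathcal{H}$ and $\pi^{\sigma}\cap\mathcal{H}$ for a plane $\pi$ with $\pi\cap\pi^{\sigma}=\emptyset$ (Example \ref{possibilities} with $i=-1$); these are Hermitian curves, not Baer subgeometries, and they lie in two distinct (mutually polar) planes.

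More importantly, the decisive uniqueness step --- showing that no configuration other than the $i=-1$ and $i=0$ cones of Theorem \ref{codewords} can support a code word of weight at most $2(q^{3}+q^{2})$ --- is exactly what you acknowledge as ``the main obstacle'' and leave open, so the proposal does not amount to a proof. For comparison, the argument that actually works (in \cite{psv} for $n=2$, and generalized in Lemmas \ref{subspace} and \ref{baselemma} and Theorem \ref{maintheorema} of this paper) does not classify local projections at all: one constructs a plane $\pi=P_{1}^{\sigma}\cap P_{2}^{\sigma}\cap P_{3}^{\sigma}$ from three suitably independent points of the support, shows via inclusion--exclusion on Theorem \ref{pperpcapS} that $|(\pi\Delta\pi^{\sigma})\cap\supp(c)|$ is of order $q^{3}$, uses a quadratic counting inequality on generators meeting $\pi$ and $\pi^{\sigma}$ in single points to force the support to cover almost all of $(\pi\Delta\pi^{\sigma})\cap\mathcal{H}$, and then subtracts a suitable multiple of $v_{\pi}-v_{\pi^{\sigma}}$ to strictly decrease the weight and induct. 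If you want to salvage your outline, you would need to replace the appeal to Theorem \ref{brol} by a blocking-set argument and supply a stability mechanism of this kind.
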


In this article, we will discuss the dual code arising from the points and generators of a Hermitian variety. This improves upon earlier work of \cite[Section 5]{psv}. We determine the minimum Hamming weight for general $n$, and we show that if $q$ is sufficiently large, a similar statement to the second part of Theorem \ref{brol} holds for general $n$. Our main result is as follows.

\begin{theorem}\label{maintheorem}
  Let $n$ be any positive integer and let $\delta>0$ be any constant. If $c$ is a code word with $\wt(c)<4q^{2n-1}$ and $q$ is sufficiently large, then there are only $n$ possible projective equivalence classes for $\supp(c)$
  ; call $S$ this set of projective equivalence classes. If $c$ is a code word with $\wt(c)<\delta q^{2n-1}$, then $c$ is a linear combination of code words which are an incidence vector of a set in $S$. The minimum distance of $C_{n}(\mathcal{H}(2n+1,q^{2}))^{\bot}$ is $2q^{2n-4}(q^3+1)$ for $n\ge 2$.
\end{theorem}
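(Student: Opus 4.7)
The plan is to organise the argument around three interdependent components: constructing $n$ families of code words realising the projective equivalence classes in $S$ (which yields the upper bound on the minimum distance), showing that every code word with weight below $4q^{2n-1}$ has a support projectively equivalent to one of these classes, and expressing every code word with weight below $\delta q^{2n-1}$ as a linear combination of the standard ones. The backbone is induction on $n$, with Theorem~\ref{brol} providing the base case.

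For the construction step, I would build the $n$ families by lifting small-weight code words from nested sub-Hermitian varieties $\mathcal{H}(3,q^2)\subseteq\mathcal{H}(5,q^2)\subseteq\dots\subseteq\mathcal{H}(2n+1,q^2)$ via extensions through a complementary Hermitian subgeometry, chosen so that, for every generator of $\mathcal{H}(2n+1,q^2)$, the intersection with the lifted support carries a constant-multiplicity product structure forcing the weighted sum to vanish. Tracking support sizes, the smallest family, coming from the $\mathcal{H}(3,q^2)$ level, has support size $2q^{2n-4}(q^3+1)$, establishing the upper bound on the minimum distance, while the other $n-1$ nesting levels produce the remaining projective equivalence classes in $S$.

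The heart of the proof is the classification. Given a code word $c$ with $\wt(c)<4q^{2n-1}$, I would pick a suitable $3$-dimensional subspace $\Sigma$ of $\PG(2n+1,q^2)$ meeting $\mathcal{H}(2n+1,q^2)$ in a non-singular $\mathcal{H}(3,q^2)$; a double-counting argument over such subspaces produces a choice on which the induced function is a small-weight code word of $C_1(\mathcal{H}(3,q^2))^\perp$, whose structure is fully described by Theorem~\ref{brol}. Bootstrapping through intermediate sub-Hermitian varieties of increasing dimension and gluing the local data yields global constraints on $\supp(c)$ that, after a case analysis on the finitely many possible configurations, force it into one of the $n$ projective classes produced in the construction step. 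The main obstacle here is quantitative: the bound $4q^{2n-1}$ is essentially tight, so error terms coming from generator-point incidences have to be absorbed by the hypothesis ``$q$ sufficiently large'' at each level of the induction.

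Finally, the linear combination statement follows by identifying a scalar multiple of the incidence vector of the class matching $\supp(c)$, subtracting it, and iterating on the resulting code word of strictly smaller weight; the tighter bound $\delta q^{2n-1}$ accommodates the fact that, after each subtraction, one still needs to be inside the classification regime $4q^{2n-1}$, and a small $\delta$ guarantees this along the whole iteration. The minimum distance lower bound $2q^{2n-4}(q^3+1)$ then follows immediately from the classification, since every nonzero code word of smaller weight would be forced into one of the $n$ families, all of which have support size at least $2q^{2n-4}(q^3+1)$.
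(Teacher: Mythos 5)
Your overall architecture (construct $n$ families, classify, then subtract-and-iterate) matches the shape of the result, but the core technical content is missing and one structural choice cannot deliver the full statement. The central gap is the classification step. You propose to restrict $c$ to a $3$-space $\Sigma$ meeting $\mathcal{H}(2n+1,q^2)$ in an $\mathcal{H}(3,q^2)$ and read off a small-weight code word of $C_1(\mathcal{H}(3,q^2))^\perp$; but the generators of $\Sigma\cap\mathcal{H}$ are lines, not generators of $\mathcal{H}(2n+1,q^2)$, so there is no reason the restriction of $c$ to $\Sigma$ satisfies the parity checks of the smaller code, and you give no mechanism (projection, summation over the generators through a line, \dots) to induce one. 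Moreover a generic such $\Sigma$ meets $\supp(c)$ in $O(1)$ points, so even if an induced code word existed it would typically be trivial, and the gluing of local $3$-space data into a global structure is exactly the hard part, which is left unaddressed. The paper instead works globally: using $|P^\sigma\cap S|\ge q^{2n-1}+2$ for $P\in S$ (Theorem \ref{pperpcapS}) together with an inclusion--exclusion argument over points of a ``semi-arc'' in $S$, it produces $n+1$ points of $S$ whose polar hyperplanes meet in an $n$-space $\pi$ with $|(\pi\Delta\pi^\sigma)\cap S|>C_nq^{2n-1}$ (Lemma \ref{subspace}); a quadratic counting inequality (Lemma \ref{baselemma}, built on the generator counts $N(n,i)$ and $n_P(n,i)$) then forces $(\pi\Delta\pi^\sigma)\cap S$ to be either small or almost all of $(\pi\Delta\pi^\sigma)\cap\mathcal{H}$, after which one subtracts $\beta(v_\pi-v_{\pi^\sigma})$ and inducts on the weight. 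Nothing in your proposal substitutes for these two lemmas.

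Two further problems. First, your logical order (classify at $4q^{2n-1}$, then iterate subtraction ``with a small $\delta$'') can only prove the linear-combination statement for $\delta$ bounded by an absolute constant, whereas the theorem asserts it for an arbitrary constant $\delta>0$; the paper proves the linear-combination result first, for any $\delta$ with $\overline{Q}$ depending on $\delta$ (Theorem \ref{maintheorema}), and only afterwards derives the $n$-class statement for weight $<4q^{2n-1}$ as a corollary (Remark \ref{maintheoremb}). Second, the minimum is not attained at the deepest level of your nesting: the weight of the type-$i$ code word is $2q^{2i+2}\mu_{n-i-1}(q^2)$, which equals $2q^{2n-2}(q+1)$ for $i=n-2$ (your $\mathcal{H}(3,q^2)$ level) but $2q^{2n-4}(q^3+1)$ for $i=n-3$ (the $\mathcal{H}(5,q^2)$ level); the latter is smaller, so attributing the value $2q^{2n-4}(q^3+1)$ to the $\mathcal{H}(3,q^2)$ level is incorrect and suggests the weight computation was not carried out. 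The construction itself is also far simpler than your lifting scheme: for any $n$-space $\pi$, the vector $v_\pi-v_{\pi^\sigma}$ is a code word because every generator meets $\pi$ and $\pi^\sigma$ in subspaces of equal dimension (Lemma \ref{intersectgenerator}), and the $n$ classes are indexed by $\dim(\pi\cap\pi^\sigma)\in\{-1,0,\dots,n-2\}$.
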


\section{The code words}\label{sec:codewords}

In this section we introduce a set of code words of the code $C_{n}(\mathcal{H}(2n+1,q^{2}))^{\bot}$. From now on, we consider the projective space $\PG(2n+1,q^{2})$, $n\geq1$.

\begin{lemma}\label{kegeltopalg}
  Consider a non-singular Hermitian variety $\mathcal{H}(2n+1,q^{2})$ in $\PG(2n+1,q^{2})$ and let $\sigma$ be the corresponding polarity. Let $\pi$ be a $k$-dimensional subspace in $\PG(2n+1,q^{2})$ such that $\pi\cap\mathcal{H}(2n+1,q^{2})$ is a cone $\pi_{i}H_{k-i-1}$ with $H_{k-i-1}\cong\mathcal{H}(k-i-1,q^{2})$ and $\pi_{i}$ an $i$-space, $-1\leq i\leq \min\{k,n\}$. Then $\pi\cap\pi^{\sigma}=\pi_{i}$. Conversely, if $\pi\cap\pi^{\sigma}$ is an $i$-space $\pi_{i}$, then $\pi\cap\mathcal{H}(2n+1,q^{2})$ is a cone $\pi_{i}H_{k-i-1}$ with $H_{k-i-1}\cong\mathcal{H}(k-i-1,q^{2})$.
\end{lemma}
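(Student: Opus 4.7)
The plan is to translate the geometric statement into a statement about the Hermitian form restricted to $\pi$ and exploit the classification of (possibly degenerate) Hermitian forms. Let $V$ be the $(2n+2)$-dimensional vector space underlying $\PG(2n+1,q^{2})$ and let $f:V\times V\to\F_{q^{2}}$ be the non-degenerate Hermitian form whose zero set is $\mathcal{H}(2n+1,q^{2})$. Write $W$ for the $(k+1)$-dimensional subspace of $V$ representing $\pi$, and let $g=f|_{W\times W}$ be the restriction.

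First I would identify $\pi\cap\pi^{\sigma}$ with the radical of $g$. By definition of the polarity, $\pi^{\sigma}$ corresponds to $W^{\perp}=\{v\in V:f(v,w)=0\text{ for all }w\in W\}$, so a vector $v\in W$ lies in $W\cap W^{\perp}$ if and only if $g(v,\cdot)\equiv 0$, i.e.\ $v$ is in the radical $\mathrm{rad}(g)$. In particular, every vector in this radical is self-orthogonal, so $\pi\cap\pi^{\sigma}\subseteq\mathcal{H}(2n+1,q^{2})$; moreover $\pi\cap\pi^{\sigma}$ is totally isotropic, which already forces its projective dimension to satisfy $i\leq n$, while $i\leq k$ is trivial.

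Next I would invoke the standard diagonalisation of Hermitian forms: since $g$ is a Hermitian form on $W$, there is a basis of $W$ in which
\[
  g(X,X)=X_{0}^{q+1}+X_{1}^{q+1}+\cdots+X_{k-i-1}^{q+1},
\]
where the vanishing coordinates $X_{k-i},\dots,X_{k}$ span $\mathrm{rad}(g)$. Hence the set of projective points where $g$ vanishes, which is exactly $\pi\cap\mathcal{H}(2n+1,q^{2})$, is the cone with vertex $\pi\cap\pi^{\sigma}$ (the projectivisation of $\mathrm{rad}(g)$, of projective dimension $i$) and base a non-singular Hermitian variety $\mathcal{H}(k-i-1,q^{2})$ in any complement of $\pi\cap\pi^{\sigma}$ inside $\pi$. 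This proves the ``if'' (the converse) direction directly, and simultaneously shows $\pi\cap\pi^{\sigma}=\pi_{i}$ whenever the intersection has the cone shape described, provided we know that the vertex of such a cone is intrinsically determined.

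For the first direction it then remains to observe that if one is given a decomposition $\pi\cap\mathcal{H}(2n+1,q^{2})=\pi_{i}H_{k-i-1}$, then $\pi_{i}$ is forced to be the singular locus of this Hermitian variety and, by the diagonalisation above, coincides with $\mathrm{rad}(g)$ and hence with $\pi\cap\pi^{\sigma}$. The main (and only real) technical point is the diagonalisation step; everything else is bookkeeping. Once the form-theoretic picture is in place, both implications and the stated range $-1\leq i\leq\min\{k,n\}$ follow immediately.
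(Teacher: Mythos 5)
Your proof is correct, but it takes a different route from the paper: the paper does not prove this lemma at all, it simply cites Hirschfeld--Thas (Lemma 23.2.8 of \emph{General Galois Geometries}) for the first implication and calls the converse a corollary. You instead give a self-contained linear-algebra argument: identify $\pi\cap\pi^{\sigma}$ with the projectivisation of the radical of the restricted Hermitian form $g=f|_{W\times W}$, diagonalise $g$ (using that over $\F_{q^{2}}$ all non-degenerate Hermitian forms of a given rank are equivalent, so $g$ splits as radical plus a form $\sum X_{j}^{q+1}$ on a complement), and read off that the isotropic locus in $\pi$ is exactly the cone with vertex $\mathrm{rad}(g)$ over a non-singular $\mathcal{H}(k-i-1,q^{2})$; the bound $i\leq n$ falls out because $W\cap W^{\perp}$ is totally isotropic. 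This buys a proof that does not lean on an external reference and makes the equivalence of the two formulations transparent, at the cost of having to justify one small point you correctly flag: that the vertex of a cone over a \emph{non-singular} Hermitian base is intrinsic to the point set. That is easily closed (the vertex is precisely the set of points $P$ of the cone such that the line joining $P$ to any other point of the cone lies entirely in the cone, and a non-singular base contributes no such points), so I regard your argument as complete; it would be worth adding that one sentence rather than leaving it as ``bookkeeping''.
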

\begin{proof}
 The first statement is \cite[Lemma 23.2.8]{ht}; the second statement is a corollary of the first.
\end{proof}

We will use this theorem mostly in the case $k=n$. 
Using the above lemma, we can prove an easy counting result.

\begin{theorem}\label{basiccounting2}
  The number of generators on $\mathcal{H}(2n+1,q^{2})$ through a fixed $k$-space on $\mathcal{H}(2n+1,q^{2})$, $0\leq k\leq n$, equals $\prod^{n-k-1}_{i=0}(q^{2i+1}+1)$.
\end{theorem}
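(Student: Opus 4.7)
My plan is to use the polarity $\sigma$ to reduce this to counting generators on a smaller non-singular Hermitian variety, and then apply Lemma~\ref{basiccounting}. The intuition is that generators through $\pi_{k}$ live inside $\pi_{k}^{\sigma}$, and Lemma~\ref{kegeltopalg} describes $\pi_{k}^{\sigma}\cap\mathcal{H}(2n+1,q^{2})$ as a Hermitian cone whose base governs the count.

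Concretely, I would proceed as follows. First, since $\pi_{k}$ is totally isotropic (being contained in $\mathcal{H}(2n+1,q^{2})$), $\pi_{k}\subseteq\pi_{k}^{\sigma}$ and $\dim\pi_{k}^{\sigma}=2n-k$. Any generator $G\supseteq\pi_{k}$ is totally isotropic, so every point of $G$ is conjugate to every point of $\pi_{k}$, whence $G\subseteq\pi_{k}^{\sigma}\cap\mathcal{H}(2n+1,q^{2})$. Next, I would apply Lemma~\ref{kegeltopalg} with the role of $\pi$ played by $\pi_{k}^{\sigma}$: since $\pi_{k}^{\sigma}\cap(\pi_{k}^{\sigma})^{\sigma}=\pi_{k}^{\sigma}\cap\pi_{k}=\pi_{k}$ is a $k$-space, the converse part of the lemma gives that $\pi_{k}^{\sigma}\cap\mathcal{H}(2n+1,q^{2})$ is a cone $\pi_{k}H$ with vertex $\pi_{k}$ and base $H\cong\mathcal{H}(2(n-k)-1,q^{2})$ inside some complementary $(2(n-k)-1)$-space $\Sigma$. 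Then I would set up the bijection between generators of $\mathcal{H}(2n+1,q^{2})$ through $\pi_{k}$ and generators of $H$: given such a $G$, the subspace $G\cap\Sigma$ has dimension $n-k-1$ by Grassmann and lies in $H$, so it is a generator of $H$; conversely, for any generator $G'$ of $H$, the span $\langle\pi_{k},G'\rangle$ has dimension $n$, lies in the cone $\pi_{k}H$, and is totally isotropic because $\pi_{k}$ and $G'$ are totally isotropic with $G'\subseteq\pi_{k}^{\sigma}$. Finally, Lemma~\ref{basiccounting} applied to $H\cong\mathcal{H}(2(n-k-1)+1,q^{2})$ yields exactly $\prod_{i=0}^{n-k-1}(q^{2i+1}+1)$ generators.

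The main obstacle, which is really more of a bookkeeping point, is making sure that the bijection in the third step is set up cleanly and that the degenerate case $k=n$ is handled: there the base $H$ formally sits in a $(-1)$-dimensional space and the empty product equals $1$, matching the trivial fact that $\pi_{k}$ itself is the unique generator through it. The dimension arithmetic $(2n-k)-k-1=2(n-k)-1$ and the index $\lfloor(2(n-k)-1-1)/2\rfloor=n-k-1$ need to be checked, but once they are, the statement reduces immediately to Lemma~\ref{basiccounting}.
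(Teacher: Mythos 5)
Your proposal is correct and follows essentially the same route as the paper: pass to $\pi_{k}^{\sigma}$, identify $\pi_{k}^{\sigma}\cap\mathcal{H}(2n+1,q^{2})$ as a cone $\pi_{k}H$ with $H\cong\mathcal{H}(2(n-k)-1,q^{2})$, biject the generators through $\pi_{k}$ with the generators of $H$, and apply Lemma~\ref{basiccounting}. You merely spell out the bijection and the dimension checks that the paper leaves implicit.
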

\begin{proof}
  Let $\pi_{k}$ be a $k$-space on $\mathcal{H}(2n+1,q^{2})$ and let $\sigma$ be the polarity corresponding to $\mathcal{H}(2n+1,q^{2})$. Then $\pi^{\sigma}_{k}$ is a $(2n-k)$-space intersecting $\mathcal{H}(2n+1,q^{2})$ in a cone $\pi_{k}H$ with $H\cong\mathcal{H}(2n-2k-1,q^{2})$. Every generator on $\mathcal{H}(2n+1,q^{2})$ through $\pi_{k}$ corresponds uniquely to a generator on $H$. Hence, there are $\prod^{n-k-1}_{i=0}(q^{2i+1}+1)$ generators on $\mathcal{H}(2n+1,q^{2})$ through $\pi_{k}$.
\end{proof}

In the construction of the code words we need the following lemma.

\begin{lemma}\label{intersectgenerator}
  Let $\pi$ be an $n$-space in $\PG(2n+1,q^{2})$ and let $\mu$ be a generator of $\mathcal{H}(2n+1,q^{2})$. Then $\pi\cap\mu$ and $\pi^{\sigma}\cap\mu$ are subspaces of the same dimension.
\end{lemma}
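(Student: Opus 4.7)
The plan is to reduce the statement to a direct dimension count using the standard properties of the polarity $\sigma$. The key observation is that since $\mu$ is a generator of $\mathcal{H}(2n+1,q^{2})$, it is a totally isotropic $n$-space, so $\mu\subseteq \mu^{\sigma}$; but $\sigma$ reverses dimensions in the ambient space $\PG(2n+1,q^{2})$, hence $\dim(\mu^{\sigma})=2n+1-1-n=n=\dim\mu$, which forces $\mu^{\sigma}=\mu$. Similarly $\pi^{\sigma}$ is another $n$-space.

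I would then apply the contravariant behavior of $\sigma$ under sum and intersection. Using $(\pi+\mu)^{\sigma}=\pi^{\sigma}\cap\mu^{\sigma}=\pi^{\sigma}\cap\mu$ together with the general formula $\dim(A^{\sigma})=2n-\dim A$, one obtains
\[
  \dim(\pi^{\sigma}\cap\mu)=\dim\bigl((\pi+\mu)^{\sigma}\bigr)=2n-\dim(\pi+\mu).
\]
On the other hand, the Grassmann dimension formula gives
\[
  \dim(\pi\cap\mu)=\dim\pi+\dim\mu-\dim(\pi+\mu)=2n-\dim(\pi+\mu).
\]
Comparing these two identities yields $\dim(\pi\cap\mu)=\dim(\pi^{\sigma}\cap\mu)$, as required.

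There is really no serious obstacle here; the whole proof is a two-line manipulation once one recognizes the self-polarity $\mu=\mu^{\sigma}$ of generators. The only point one must be careful with is keeping track of the ambient dimension (i.e.\ that $\sigma$ sends $k$-spaces of $\PG(2n+1,q^{2})$ to $(2n-k)$-spaces) so that the constants match in the two expressions for $\dim(\pi+\mu)$.
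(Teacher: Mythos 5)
Your proof is correct and is essentially the paper's own argument in dual form: the paper applies $\sigma$ to $\mu\cap\pi$ and uses $\dim(\langle\mu^{\sigma},\pi^{\sigma}\rangle)=2n-\dim(\mu\cap\pi)$ together with the Grassmann identity, whereas you apply it to $\langle\pi,\mu\rangle$; both hinge on $\mu=\mu^{\sigma}$ and the dimension-reversing, sum/intersection-exchanging properties of the polarity. No gap.
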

\begin{proof}
  We denote $\mu\cap\pi=\pi_{j}$, a $j$-space, possibly empty ($j=-1$). It follows that $2n-j=\dim((\mu\cap\pi)^{\sigma})=\dim(\langle\mu^{\sigma},\pi^{\sigma}\rangle)$. Using the Grassmann identity and $\mu=\mu^{\sigma}$ ($\mu$ is a generator), we find $\dim(\mu\cap\pi^{\sigma})=\dim(\mu)+\dim(\pi^{\sigma})-\dim(\langle\mu^{\sigma},\pi^{\sigma}\rangle)=j$.
\end{proof}

Now, we can give the construction of small weight code words in the code $C_{n}(\mathcal{H}(2n+1,q^{2}))^{\bot}$. This construction is based on \cite[Theorem 58]{psv}.

\begin{theorem}\label{codewords}
  Consider $\mathcal{H}(2n+1,q^{2})$ and its corresponding polarity $\sigma$. Let $\pi$ be an $n$-space in $\PG(2n+1,q^{2})$. Denote the incidence vector of $\pi\cap\mathcal{H}(2n+1,q^{2})$ by $v_{\pi}$ and the incidence vector of $\pi^{\sigma}\cap\mathcal{H}(2n+1,q^{2})$ by $v_{\pi^{\sigma}}$. Then $\alpha(v_{\pi}-v_{\pi^{\sigma}})$, $\alpha\in\F_{p}$, is a code word of $C_{n}(\mathcal{H}(2n+1,q^{2}))^{\bot}$.
\end{theorem}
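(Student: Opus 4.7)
The plan is to verify the defining condition of $C_n(\mathcal{H}(2n+1,q^2))^\perp$ directly: for every generator $\mu$ of $\mathcal{H}(2n+1,q^2)$, the sum of the coordinates of $\alpha(v_\pi - v_{\pi^\sigma})$ indexed by points of $\mu$ must vanish in $\F_p$. Since scalar multiplication by $\alpha$ is harmless, it suffices to treat the case $\alpha = 1$.

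First I would observe that because $\mu$ is a generator, $\mu$ lies entirely on $\mathcal{H}(2n+1,q^2)$. Consequently, restricting $v_\pi$ to the point set of $\mu$ gives the incidence vector of $\mu \cap \pi \cap \mathcal{H}(2n+1,q^2) = \mu \cap \pi$, and similarly restricting $v_{\pi^\sigma}$ to $\mu$ gives the incidence vector of $\mu \cap \pi^\sigma$. Therefore the sum of entries of $v_\pi - v_{\pi^\sigma}$ over the points of $\mu$ equals $|\mu \cap \pi| - |\mu \cap \pi^\sigma|$.

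The key step is then invoking Lemma \ref{intersectgenerator}, which asserts that $\mu \cap \pi$ and $\mu \cap \pi^\sigma$ are subspaces of the same dimension $j \in \{-1, 0, \ldots, n\}$. Two projective subspaces of the same dimension $j$ over $\F_{q^2}$ contain the same number of points, namely $\theta_j(q^2)$ (with the convention that the empty subspace, $j=-1$, contains $0$ points). Hence $|\mu \cap \pi| - |\mu \cap \pi^\sigma| = 0$ in $\mathbb{Z}$, and in particular $0$ in $\F_p$. As this holds for every generator $\mu$, the vector $v_\pi - v_{\pi^\sigma}$ lies in the $\F_p$-null space of the incidence matrix, i.e.\ in $C_n(\mathcal{H}(2n+1,q^2))^\perp$.

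There is essentially no obstacle here: the content of the statement has been packaged into Lemma \ref{intersectgenerator}, and the remainder is a bookkeeping argument. The only subtlety worth flagging in the write-up is the edge case $j=-1$ (i.e.\ $\pi$ or $\pi^\sigma$ meeting $\mu$ trivially), which is handled uniformly once one agrees that the empty set contributes $0$ on both sides.
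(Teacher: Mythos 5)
Your proposal is correct and follows essentially the same route as the paper: fix a generator $\mu$, invoke Lemma \ref{intersectgenerator}, and conclude that the contributions of $v_\pi$ and $v_{\pi^\sigma}$ over $\mu$ agree. The only (harmless) difference is that you use the full strength of the lemma to get exact equality $|\mu\cap\pi|=|\mu\cap\pi^\sigma|$ in $\mathbb{Z}$, whereas the paper only records that both counts are $\equiv 1\pmod q$ or both zero, which already suffices modulo $p$.
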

\begin{proof}
  Let $\mu$ be a generator of $\mathcal{H}(2n+1,q^{2})$ and denote its incidence vector by $v_{\mu}$. Using Lemma \ref{intersectgenerator}, we find $\mu$ intersects both $\pi$ and $\pi^{\sigma}$, or neither. In the first case $|\pi\cap\mu|\equiv|\pi^{\sigma}\cap\mu|\equiv 1\pmod{q}$ and in the second case $|\pi\cap\mu|=|\pi^{\sigma}\cap\mu|=0$. In both cases $v_{\pi}\cdot v_{\mu}=v_{\pi^{\sigma}}\cdot v_{\mu}$. The theorem follows.
\end{proof}

\begin{example}\label{possibilities}
  We list the different possibilities for $\pi\cap\pi^{\sigma}$. Hereby, we use Lemma \ref{kegeltopalg} for $k=n$ and Theorem \ref{codewords}. We write $\mathcal{H}=\mathcal{H}(2n+1,q^{2})$.
  \begin{itemize}
    \item $\pi\cap\pi^{\sigma}=\emptyset$. We write $\pi\cap\mathcal{H}=H$ and $\pi^{\sigma}\cap\mathcal{H}=H'$. We know, $H,H'\cong\mathcal{H}(n,q^{2})$. The corresponding code words have weight $2\mu_{n}(q^{2})$.
    \item $\pi\cap\pi^{\sigma}=\pi_{i}$, an $i$-space, $0\leq i\leq n-2$. We write $\pi\cap\mathcal{H}=\pi_{i}H$ and $\pi^{\sigma}\cap\mathcal{H}=\pi_{i}H'$, which are both cones, with $H,H'\cong\mathcal{H}(n-i-1,q^{2})$. The corresponding code words have weight $2q^{2i+2}\mu_{n-i-1}(q^{2})$. 
    \item $\pi\cap\pi^{\sigma}=\pi_{n-1}$, an $(n-1)$-space. Then $\pi\cap\mathcal{H}=\pi^{\sigma}\cap\mathcal{H}=\pi_{n-1}$ since $\mathcal{H}(0,q^{2})$ is empty. The construction gives rise to the zero code word.
    \item $\pi\cap\pi^{\sigma}=\pi_{n}$, an $n$-space. Then $\pi=\pi^{\sigma}=\pi_{n}\subset\mathcal{H}$. Also in this case, the construction gives rise to the zero code word.
  \end{itemize}
  It can easily be checked that among these four cases, the code words with smallest weight are the ones corresponding to $i=n-3$.
\end{example}

\begin{remark}\label{collinear}
  Consider the construction from Theorem \ref{codewords}, with $\pi\cap\mathcal{H}=\pi_{i}H_{n-i-1}$ and $\pi^{\sigma}\cap\mathcal{H}=\pi_{i}H'_{n-i-1}$. Let $P$ be a point of $\pi_{i}H_{n-i-1}$ and let $P'$ be a point of $\pi_{i}H'_{n-i-1}$. We know that $P'\in\pi^{\sigma}\subseteq P^{\sigma}$ and $P'\in\mathcal{H}$. Hence, the line $PP'$ is a line of $\mathcal{H}$. 
\end{remark}

\section{Some counting results}

\begin{lemma}\label{cnj}
  Consider the non-singular Hermitian variety $\mathcal{H}(2n+1,q^{2})\subset\PG(2n+1,q^{2})$ and let $\sigma$ be the corresponding polarity. Let $\tau$ be a $j$-space such that $\tau\cap\mathcal{H}(2n+1,q^{2})=H_{j}\cong\mathcal{H}(j,q^{2})$, $-1\leq j\leq n$. The number of generators on $\mathcal{H}(2n+1,q^{2})$ skew to $\tau$ equals
  \[
    c_{n,j}:=q^{\binom{j+1}{2}}\prod^{n-j-1}_{k=0}(q^{2k+1}+1)\prod^{2n-j+1}_{l=2(n-j)+1}(q^{l}-(-1)^{l}).
  \]
\end{lemma}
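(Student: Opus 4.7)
The plan is to decompose the count by intersections with $\tau^\sigma$ and reduce to a ``complementary'' case handled by a graph-of-isometry argument. Work in $V=\F_{q^2}^{2n+2}$ with the Hermitian form $h$ defining $\mathcal{H}(2n+1,q^2)$, and let $U$ denote the $(j+1)$-dimensional subspace corresponding to $\tau$; since $\tau\cap\mathcal{H}$ is non-singular, $h|_U$ is non-degenerate and $V=U\oplus U^\perp$. A generator $\mu$ is a maximal totally isotropic subspace $M\subset V$ of vector dimension $n+1$, and from $M=M^\perp$, the identity $(M\cap U)^\perp = M+U^\perp$, and the Grassmann formula, one obtains
\[
 \dim(M\cap U)-\dim(M\cap U^\perp)=j-n.
\]
Hence $M$ is skew to $\tau$ exactly when $N:=M\cap U^\perp$ has vector dimension $n-j$, equivalently when $N$ is an $(n-j-1)$-dimensional projective isotropic subspace on $\tau^\sigma\cap\mathcal{H}\cong\mathcal{H}(2n-j,q^2)$.

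For a fixed such $N$, any admissible $M$ contains $N$ and lies in $N^\perp$. Passing to the non-degenerate Hermitian quotient $N^\perp/N$ of vector dimension $2j+2$, $M/N$ becomes maximal totally isotropic of vector dimension $j+1$, and the condition $M\cap U^\perp=N$ translates into $(M/N)\cap V'=0$, where $V':=(U^\perp\cap N^\perp)/N$ is easily seen to have vector dimension $j+1$ with non-degenerate induced Hermitian form (a vector $v'\in U^\perp\cap N^\perp$ perpendicular to all of $U^\perp\cap N^\perp$ must lie in $(U^\perp\cap N^\perp)^\perp=U+N$, and the $U$-component then vanishes by non-degeneracy of $h|_U$). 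In the orthogonal decomposition $N^\perp/N=V'\oplus (V')^\perp$, any such $M/N$ projects isomorphically onto $(V')^\perp$ and is therefore the graph $\{\psi(v)+v:v\in(V')^\perp\}$ of a unique linear $\psi:(V')^\perp\to V'$; total isotropy is equivalent to $h(\psi(v_1),\psi(v_2))=-h(v_1,v_2)$, so $\psi$ is a Hermitian anti-isometry. Since the norm $\F_{q^2}^\times\to\F_q^\times$ is surjective, $-h|_{V'}$ is isometric to $h|_{V'}$, and the set of such $\psi$ is in bijection with $\mathrm{GU}(j+1,q)$. Hence the count of $M$ with $M\cap U^\perp=N$ equals $q^{\binom{j+1}{2}}\prod_{l=1}^{j+1}(q^l-(-1)^l)$, independent of $N$.

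It then remains to enumerate the $(n-j-1)$-dimensional projective isotropic subspaces on $\mathcal{H}(2n-j,q^2)$ by the standard flag-counting argument underlying Lemma \ref{basiccounting}: an $i$-dimensional isotropic extends in $\mu_{2n-j-2(i+1)}(q^2)$ ways to an $(i+1)$-dimensional one, giving $\prod_{i=0}^{n-j-1}\mu_{2n-j-2i}(q^2)$ complete flags, divided by the $\prod_{i=0}^{n-j-1}\theta_i(q^2)$ flags inside a fixed $(n-j-1)$-projective subspace. The principal obstacle is the final algebraic simplification of this count times $q^{\binom{j+1}{2}}\prod_{l=1}^{j+1}(q^l-(-1)^l)$ into the stated $q^{\binom{j+1}{2}}\prod_{k=0}^{n-j-1}(q^{2k+1}+1)\prod_{l=2(n-j)+1}^{2n-j+1}(q^l-(-1)^l)$: one telescopes the signed factors $(q^{2n-j-2i+1}+(-1)^j)(q^{2n-j-2i}-(-1)^j)$ from the $\mu$'s against $(q^{i+1}-1)(q^{i+1}+1)$ from the $\theta$'s, and regroups the $\mathrm{GU}$-factors $(q^l-(-1)^l)$ for $l\le j+1$, verifying that the two products contain the same multiset of factors $(q^l\pm1)$ for $l$ ranging through the odd and even integers in $\{1,\ldots,2n-j+1\}$, independently of the parity of $j$.
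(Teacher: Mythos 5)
Your proof is correct, but it follows a genuinely different route from the paper's. The paper proves the formula by induction on $j$: it notes (via \cite[Theorem 23.4.2 (i)]{ht}) that the count depends only on $(n,j)$, takes $c_{n,-1}$ from Lemma \ref{basiccounting} as the base case, and derives the recursion $c_{n,j}=c_{n-1,j-1}\,q^{j}(q^{2n-j+1}-(-1)^{2n-j+1})$ by double-counting pairs $(P,\mu)$ with $P\in\mu$ projecting from $\tau^\sigma$ onto a non-isotropic point of $\tau$. You instead give a direct, non-inductive count: you fibre the skew generators over $N=\mu\cap\tau^{\sigma}$, observe via $\dim(M\cap U)-\dim(M\cap U^{\perp})=j-n$ that skewness forces $N$ to be an $(n-j-1)$-dimensional isotropic on $\tau^{\sigma}\cap\mathcal H\cong\mathcal H(2n-j,q^2)$, and identify each fibre with the set of anti-isometries $(V')^{\perp}\to V'$, i.e.\ with $\mathrm{GU}(j+1,q)$, of cardinality $q^{\binom{j+1}{2}}\prod_{l=1}^{j+1}(q^{l}-(-1)^{l})$. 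I checked the pieces: the non-degeneracy of $V'$, the automatic bijectivity of $\psi$, the flag count $\prod_{i=0}^{n-j-1}\mu_{2n-j-2i}(q^2)/\prod_{i=0}^{n-j-1}\theta_i(q^2)$ for the number of choices of $N$, and the final simplification, which reduces to the identity $\prod_{m=1}^{n-j}(q^{2m}-1)\prod_{k=0}^{n-j-1}(q^{2k+1}+1)=\prod_{l=1}^{2(n-j)}(q^{l}-(-1)^{l})$ after noting that the $\mu$-factors contribute exactly $\prod_{l=j+2}^{2n-j+1}(q^{l}-(-1)^{l})$; all of this works, including the degenerate cases $j=-1$ and $j=n$. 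Your argument has the advantage of being self-contained and structurally transparent (it exhibits the answer as $|\mathrm{GU}(j+1,q)|$ times a count of isotropic subspaces, which also explains the factor $q^{\binom{j+1}{2}}$ conceptually), at the cost of a heavier algebraic verification at the end; the paper's recursion avoids that bookkeeping and reuses the projection technique that recurs throughout its later lemmas.
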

\begin{proof}
  By \cite[Theorem 23.4.2 (i)]{ht} we know that the number of generators skew to $\tau$ only depends on the parameters $n$ and $j$ and not on the choice of $\tau$ itself.
  \par We will prove this theorem using induction on $j$. If $j=-1$, $\tau$ is the empty space and hence $c_{n,-1}$ equals the total number of generators. By Lemma \ref{basiccounting} we find $c_{n,-1}=\prod^{n}_{k=0}(q^{2k+1}+1)$. Now, we prove that a relation between $c_{n,j}$ and $c_{n-1,j-1}$ holds.
  \par By Lemma \ref{kegeltopalg} we know $\tau\cap\tau^{\sigma}=\emptyset$. Hence, every point $P\in\PG(2n+1,q^{2})\setminus(\tau\cup\tau^{\sigma})$ can uniquely be written as $P_{\tau}+\lambda_{P}P_{\tau^{\sigma}}$, $P_{\tau}\in\tau$, $P_{\tau^{\sigma}}\in\tau^{\sigma}$, $\lambda_{P}\in\F^{*}_{q^{2}}$. For every point $P\in\PG(2n+1,q^{2})\setminus(\tau\cup\tau^{\sigma})$, we define $\phi_{\tau}(P)=P_{\tau}$. This is the projection of $P$ from $\tau^{\sigma}$ on $\tau$. We define a correlation $\overline{\sigma}:\tau\to\tau$ that maps the subspace $U\subset\tau$ to $U^{\sigma}\cap\tau$. It is straightforward to check that $\overline{\sigma}$ defines a polarity on $\tau$. Moreover, it can be seen easily that the points of $H_{j}$ are the absolute points of $\overline{\sigma}$. Hence, $\overline{\sigma}$ is the polarity of $\tau$ corresponding to $H_{j}$.
  \par Now, we consider the set $S=\{(P,\mu)\mid P\in\mu\setminus\tau^{\sigma},\phi_{\tau}(P)\notin H_{j},\mu\text{ a generator}, \mu\cap\tau=\emptyset\}$. We count the number of elements of $S$ in two ways. On the one hand, there are $c_{n,j}$ generators skew to $\tau$. Let $\mu$ be such a generator. The intersection $\mu\cap\tau^{\sigma}$ is an $(n-j-1)$-space since $\dim(\mu\cap\tau^{\sigma})+\dim(\langle\mu,\tau\rangle)=2n$. We also know that $\phi_{\tau}(P)=R$ for every point $P\in\langle R,\mu\cap\tau^{\sigma}\rangle\setminus(\mu\cap\tau^{\sigma})$, $R\in\tau$. Hence, for each generator there are $\theta_{n}(q^{2})-\theta_{n-j-1}(q^{2})-\mu_{j}(q^{2})(\theta_{n-j}(q^{2})-\theta_{n-j-1}(q^{2}))=q^{2(n-j)}(\theta_{j}(q^{2})-\mu_{j}(q^{2}))$ points fulfilling the requirements. On the other hand, we count the points $P\in\mathcal{H}(2n+1,q^{2})\setminus(\tau\cup\tau^{\sigma})$ fulfilling the requirements. There are $\mu_{2n+1}(q^{2})-\mu_{j}(q^{2})-\mu_{2n-j}(q^{2})$ points in this set. We must assure that $\phi_{\tau}(P)\notin H_{j}$. Let $R$ be a point of $H_{j}$. Since $\tau^{\sigma}\subseteq R^{\sigma}$, a line $RQ$, $Q\in\tau^{\sigma}$, is a tangent line (in $R$) to $\mathcal{H}(2n+1,q^{2})$ or a line which is completely contained in $\mathcal{H}(2n+1,q^{2})$. Hence, $\phi_{\tau}(P)$ is a point of $H_{j}$ iff $P$ lies on a line through $\phi_{\tau}(P)$ and a point of $\tau^{\sigma}\cap\mathcal{H}(2n+1,q^{2})$. Consequently there are
  \begin{align*}
    &\mu_{2n+1}(q^{2})-\mu_{j}(q^{2})-\mu_{2n-j}(q^{2})-\mu_{j}(q^{2})\mu_{2n-j}(q^{2})(q^{2}-1)\\=\ &q^{2n-j}(\theta_{j}(q^{2})-\mu_{j}(q^{2}))(q^{2n-j+1}-(-1)^{2n-j+1})
  \end{align*}
  points $P\in\mathcal{H}(2n+1,q^{2})\setminus(\tau\cup\tau^{\sigma})$ fulfilling the requirement $\phi_{\tau}(P)\notin H_{j}$. Now, we fix such a point $P$ and we count the number of generators skew to $\tau$, through it. All these generators are contained in $P^{\sigma}$. We know $P^{\sigma}\cap\mathcal{H}(2n+1,q^{2})$ is a cone $PH_{2n-1}$, with $H_{2n-1}\cong\mathcal{H}(2n-1,q^{2})$. There is a 1-1 correspondence between the generators of $\mathcal{H}(2n+1,q^{2})$ through $P$ and the generators of $H_{2n-1}$. We also find
  \[
    \tau\cap P^{\sigma}=\tau\cap(\phi_{\tau}(P)+\lambda P')^{\sigma}=\tau\cap((\phi_{\tau}(P))^{\sigma}+\lambda^{q} P'^{\sigma})=\tau\cap(\phi_{\tau}(P))^{\sigma}=(\phi_{\tau}(P))^{\overline{\sigma}},
  \]
  with $P'\in\tau^{\sigma}$ (and thus $\tau\subset P'^{\sigma}$). Hence, the $(j-1)$-space $\tau\cap P^{\sigma}$ intersects $\mathcal{H}(2n+1,q^{2})$ in $H_{j-1}\cong\mathcal{H}(j-1,q^{2})$, since $\phi_{\tau}(P)\notin H_{j}$. We can choose the base of the cone $PH_{2n-1}$ such that it contains $\tau\cap P^{\sigma}$. The generators through $P$ and skew to $\tau$ correspond to the generators of $H_{2n-1}$, skew to $\tau\cap P^{\sigma}$. There are $c_{n-1,j-1}$ such generators. We conclude
  \begin{align*}
    c_{n,j}q^{2(n-j)}\left[\theta_{j}(q^{2})-\mu_{j}(q^{2})\right]&=c_{n-1,j-1}q^{2n-j}\left[\theta_{j}(q^{2})-\mu_{j}(q^{2})\right](q^{2n-j+1}-(-1)^{2n-j+1})\\
    \Rightarrow \qquad c_{n,j}&=c_{n-1,j-1}q^{j}(q^{2n-j+1}-(-1)^{2n-j+1}).
  \end{align*}
  \par An induction calculation now finishes the proof.
\end{proof}

From now on in this section, we use the following notation: $H\cong\mathcal{H}(2n+1,q^{2})$  is a non-singular Hermitian variety and $\sigma$ is the polarity corresponding to it; $\pi$ is an $n$-space in $\PG(2n+1,q^{2})$, such that $\pi\cap H$ is a cone $\pi_{i}H_{n-i-1}$ with $H_{n-i-1}\cong\mathcal{H}(n-i-1,q^{2})$ and $\pi_{i}$ an $i$-space, $-1\leq i\leq n$. By Lemma \ref{kegeltopalg}, for $k=n$, we know $\pi\cap\pi^{\sigma}=\pi_{i}$ and consequently $\pi^{\sigma}\cap H$ is a cone $\pi_{i}H'_{n-i-1}$ with $H'_{n-i-1}\cong\mathcal{H}(n-i-1,q^{2})$.

\begin{definition}
  The number of generators on $H$ intersecting $\pi$ in a fixed point $P\in\pi_{i}H_{n-i-1}\setminus\pi_{i}$ and no other point of $\pi_{i}H_{n-i-1}$, and intersecting $\pi^{\sigma}$ in a fixed point $P'\in\pi_{i}H'_{n-i-1}\setminus\pi_{i}$ and no other point of $\pi_{i}H'_{n-i-1}$ is denoted by $N(\pi,P,P')$. The number of generators on $H$ skew to $\pi$ is denoted by $N'(\pi)$.
\end{definition}

By Lemma \ref{intersectgenerator} we know that the generators skew to $\pi$ are also skew to $\pi^{\sigma}$ and that the generators intersecting $\pi$ in precisely one point also intersect $\pi^{\sigma}$ in precisely one point.

\begin{lemma}
  The number $N'(\pi)$ only depends on the intersection parameters $(n,i)$ of $\pi$.
\end{lemma}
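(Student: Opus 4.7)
The plan is to exploit the well-known transitivity of the unitary group on $n$-spaces of a given intersection type with $H$. The key observation is that the isomorphism class of the Hermitian form induced on $\pi$ by the ambient Hermitian form defining $H$ is completely determined by the pair $(n,i)$, where $n = \dim \pi$ and $i = \dim(\pi \cap \pi^{\sigma})$. Indeed, by Lemma \ref{kegeltopalg}, the radical of this induced form is (the lift of) $\pi_{i} = \pi \cap \pi^{\sigma}$, and the non-degenerate quotient is the Hermitian space of rank $n-i$ whose absolute points form $H_{n-i-1} \cong \mathcal{H}(n-i-1, q^{2})$.

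Next, I would invoke Witt's extension theorem for Hermitian forms: any isometry between two Hermitian subspaces of the ambient Hermitian space extends to an element of the unitary group. Applied here, this means that for any two $n$-spaces $\pi$ and $\pi'$ with the same intersection parameters $(n,i)$, there exists a collineation $g \in \PGU(2n+2, q^{2})$ with $g(H) = H$ and $g(\pi) = \pi'$. This is precisely the content of \cite[Theorem 23.4.2 (i)]{ht}, which was already used in the proof of Lemma \ref{cnj} to justify that $c_{n,j}$ depends only on $n$ and $j$; I would simply cite the same result.

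Finally, since $g$ preserves $H$ and permutes its set of generators, it sends generators of $H$ skew to $\pi$ bijectively to generators of $H$ skew to $\pi'$. Hence $N'(\pi) = N'(\pi')$, and so $N'(\pi)$ depends only on the parameters $(n,i)$. The argument is essentially a direct invocation of a standard result; the only point requiring any care is matching up the combinatorial intersection parameter $i = \dim(\pi \cap \pi^{\sigma})$ with the algebraic data (rank of the radical, type of the non-degenerate part) that determines the orbit under $\PGU(2n+2,q^{2})$, and this was already done in Lemma \ref{kegeltopalg}. No significant obstacle is anticipated.
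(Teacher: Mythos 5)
Your proposal is correct and follows essentially the same route as the paper, which simply cites \cite[Theorem 23.4.2 (i)]{ht} (the transitivity of the unitary group on subspaces of a given intersection type) to conclude that $N'(\pi)$ depends only on $(n,i)$. Your additional discussion of Witt's theorem and the induced form merely unpacks why that cited result applies, so there is no substantive difference.
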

\begin{proof}
  This follows immediately from \cite[Theorem 23.4.2 (i)]{ht}. 
\end{proof}

\begin{notation}
  Consequently, we can denote $N'(\pi)$ by $N'(n,i)$.
\end{notation}

\begin{lemma}\label{NlinkedN'}
  For $n\geq 2$, $-1\leq i\leq n-2$, $N(\pi,P,P')=N'(n-2,i)$. Consequently, $N(\pi,P,P')$ only depends on the intersection parameters $(n,i)$ of $\pi$.
\end{lemma}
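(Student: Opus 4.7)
My plan is to reduce $N(\pi,P,P')$ to the count $N'(n-2,i)$ in one dimension lower, by forming the polar quotient by the line $L := \langle P, P'\rangle$.

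First, Remark \ref{collinear} guarantees $L \subset H$, so every generator $\mu$ counted by $N(\pi,P,P')$ contains $L$ and lies in $L^\sigma$. Applying Lemma \ref{kegeltopalg} to $L^\sigma$ (whose radical is $L$ itself) gives $L^\sigma \cap H = L \tilde H$ with $\tilde H \cong \mathcal{H}(2n-3,q^2)$, and the polar form of $\sigma$ descends to a non-degenerate Hermitian form on $L^\sigma/L \cong \PG(2n-3,q^2)$ whose set of absolute points is the image of $\tilde H$. The map $\mu \mapsto \mu/L$ gives a bijection between generators of $H$ through $L$ and generators of this $\mathcal{H}(2n-3,q^2)$.

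Next, I would unpack the defining conditions of $N(\pi,P,P')$. By Lemma \ref{intersectgenerator}, $\dim(\mu\cap\pi) = \dim(\mu\cap\pi^\sigma)$, so both side conditions collapse to the single requirement $\dim(\mu\cap\pi)=0$ (given $L\subseteq\mu$). Setting $\tau := \pi\cap L^\sigma$, which is an $(n-1)$-space containing $P$ and $\pi_i$, we have $\mu\cap\pi = \mu\cap\tau$ since $\mu\subseteq L^\sigma$. A short linear-algebra argument using $\tau\cap L=\{P\}$ shows that $\mu\cap\tau = \{P\}$ is equivalent to $(\mu/L)\cap\tilde\tau = \emptyset$ in $L^\sigma/L$, where $\tilde\tau := \langle\tau,L\rangle/L$ is an $(n-2)$-space. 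Hence $N(\pi,P,P')$ equals the number of generators of $\tilde H$ skew to $\tilde\tau$.

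It then remains to show that $\tilde\tau$ has intersection parameters $(n-2,i)$ with respect to $\tilde H$, which by Lemma \ref{kegeltopalg} amounts to computing $\dim(\tilde\tau \cap \tilde\tau^{\tilde\sigma})$. The induced polarity $\tilde\sigma$ gives $\tilde\tau^{\tilde\sigma} = (\langle\pi^\sigma,L\rangle \cap L^\sigma)/L$, and a Grassmann calculation exploiting $\pi\cap\pi^\sigma = \pi_i$ (together with $\pi_i \subseteq L^\sigma$, which follows from $L \subseteq \pi_i^\sigma$) identifies the preimage of $\tilde\tau\cap\tilde\tau^{\tilde\sigma}$ with $\langle\pi_i,L\rangle$, an $(i+2)$-space, which projects to an $i$-space in the quotient. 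This yields $N(\pi,P,P') = N'(n-2,i)$, and the ``consequently'' statement follows because $N'$ depends only on the intersection parameters. The main obstacle will be the careful handling of the induced polarity on $L^\sigma/L$ and the bookkeeping in the dimension count for $\tilde\tau\cap\tilde\tau^{\tilde\sigma}$; everything else reduces to routine applications of Grassmann's identity and the cone bijection.
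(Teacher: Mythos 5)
Your proof is correct and follows essentially the same route as the paper: both reduce the count to generators of the residual $\mathcal{H}(2n-3,q^{2})$ inside $\ell^{\sigma}$ (with $\ell=\langle P,P'\rangle$) that are skew to an $(n-2)$-space with intersection parameters $(n-2,i)$, and both use Lemma \ref{intersectgenerator} to merge the two one-point conditions into a single skewness condition. The only cosmetic difference is that you pass to the quotient $L^{\sigma}/L$ and apply Lemma \ref{intersectgenerator} upstairs in $H$ before descending, whereas the paper chooses an explicit base $\tau\supseteq\langle W,W'\rangle$ of the cone and applies that lemma to $H_{2n-3}$ after verifying $W^{\sigma'}=W'$.
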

\begin{proof}
  Consider the points $P\in(\pi_{i}H_{n-i-1}\setminus\pi_{i})\subseteq\pi$ and $P'\in(\pi_{i}H'_{n-i-1}\setminus\pi_{i})\subseteq\pi^{\sigma}$. Denote $\ell=\langle P,P'\rangle$. Then $\ell^{\sigma}$ is a $(2n-1)$-space intersecting $H$ in a cone with $\ell$ as vertex and a non-singular $(2n-3)$-dimensional Hermitian variety $H_{2n-3}$ as base. 
  Since $\dim(\ell\cap\pi)=\dim(\ell\cap\pi^{\sigma})=0$, $\ell^{\sigma}\cap\pi=V$ is an $(n-1)$-space and $\ell^{\sigma}\cap\pi^{\sigma}=V'$ is an $(n-1)$-space. Also, $\ell\subset\langle\pi,\pi^{\sigma}\rangle=\pi^{\sigma}_{i}$, hence $\pi_{i}\subset l^{\sigma}$. Let $W$, resp. $W'$, be an $(n-2)$-space in $V$, resp. $V'$, containing $\pi_{i}$ and not through $P$, resp. $P'$. Denote the $(2n-i-4)$-space $\left\langle W,W'\right\rangle$ by $\tau'$. It can be seen that on the one hand $\tau'\subset\ell^{\sigma}$ and on the other hand $\ell\cap\tau'=\emptyset$, so the $(2n-3)$-space $\tau$ containing the base $H_{2n-3}$ can be chosen such that $\tau'\subseteq\tau$. Let $\sigma'$ be the polarity of $\tau$ corresponding to $H_{2n-3}$. Analogously to the proof of Lemma \ref{cnj} we can define this polarity as follows: $U^{\sigma'}=\tau\cap U^{\sigma}$. It now immediately follows that $W^{\sigma'}=W'$ because both are $(n-2)$-spaces contained in $W^{\sigma}$ and in $\tau$.
  \par Arguing as in the proof of Lemma \ref{cnj}, we see there is a 1-1 correspondence between the generators of $H_{2n-3}$ and the generators of $H$ through $\ell$ (the generators containing $P$ and $P'$). If a generator of $H$ through $\ell$ contains no points of $\pi\cup\pi^{\sigma}$ but $P$ and $P'$, then its corresponding generator of $H_{2n-3}$ is skew to $W$ and $W'$. Vice versa, every generator $\mu$ of $H_{2n-3}$ skew to $W$ and $W'$, is contained in precisely one generator of $H$ intersecting  $\pi\cup\pi^{\sigma}$ in only the points $P$ and $P'$, namely $\langle\mu,P,P'\rangle$. Since $W^{\sigma'}=W'$, the generators of $H_{2n-3}$ skew to $W$ and $W'$ are the ones skew to $W$, by Lemma \ref{intersectgenerator}. Hence, $N(\pi,P,P')=N'(n-2,i)$.
  \par The second statement of the lemma follows immediately from the first one.
\end{proof}

\begin{notation}
  Since $N(\pi,P,P')$ only depends on the intersection parameters $(n,i)$ of $\pi$, we can denote it by $N(n,i)$.
\end{notation}

The previous theorem now states $N(n,i)=N'(n-2,i)$ for $n\geq2$, $-1\leq i\leq n-2$.

\begin{lemma}\label{formuleN}
  For $n\geq 1$ and $-1\leq i\leq n-2$, the following equality holds:
  \[
    N(n,i)=q^{(n-1)^{2}-\binom{n-i-1}{2}}\prod^{n-i-2}_{j=1}(q^{j}-(-1)^{j}).
  \]
\end{lemma}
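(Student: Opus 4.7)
The plan is to reduce the lemma via Lemma \ref{NlinkedN'} to a count of skew generators, then combine Lemma \ref{cnj} (the $i=-1$ base) with a recursion established by a quotient argument (for $i\geq 0$). The edge case $n=1$, $i=-1$ falls outside the scope of Lemma \ref{NlinkedN'} but is immediate: by Remark \ref{collinear} the line $PP'$ lies on $\mathcal{H}$, and being the unique line of $\mathcal{H}(3,q^{2})$ through both $P$ and $P'$ it gives $N(1,-1)=1$, matching $q^{0}\cdot 1$.

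For $n\geq 2$, Lemma \ref{NlinkedN'} yields $N(n,i)=N'(n-2,i)$, so it suffices to establish
\[
N'(m,i)=q^{(m+1)^{2}-\binom{m-i+1}{2}}\prod_{j=1}^{m-i}(q^{j}-(-1)^{j}), \quad m\geq 0,\ -1\leq i\leq m.
\]
When $i=-1$ the subspace $\pi$ has non-singular Hermitian intersection, so $N'(m,-1)=c_{m,m}$; Lemma \ref{cnj} with $j=m$ gives $c_{m,m}=q^{\binom{m+1}{2}}\prod_{l=1}^{m+1}(q^{l}-(-1)^{l})$, and the identity $(m+1)^{2}-\binom{m+2}{2}=\binom{m+1}{2}$ makes this match the target.

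For $i\geq 0$ I would establish the recursion $N'(m,i)=q^{2m+1}N'(m-1,i-1)$ by a fibering argument: pick $R\in\pi_{i}$, so both $\pi$ and $\pi^{\sigma}$ lie in $R^{\sigma}$, and the quotient $R^{\sigma}/R$ carries $H_{2m-1}\cong\mathcal{H}(2m-1,q^{2})$ with induced polarity $\bar\sigma$ (as in the proof of Lemma \ref{cnj}); then $\bar\pi=\pi/R$ is an $(m-1)$-space of intersection parameter $i-1$. Any generator $\mu$ skew to $\pi$ misses $R$, so $U:=\mu\cap R^{\sigma}$ is an $(m-1)$-space on $\mathcal{H}$ disjoint from $R$ projecting to a generator $\nu$ of $H_{2m-1}$; since $\pi\subset R^{\sigma}$ and $\pi=\langle R,\bar\pi\rangle$, one checks $U\cap\pi=\emptyset\iff\nu\cap\bar\pi=\emptyset$. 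For fixed $\nu$, set $\mu_{0}=\langle R,\nu\rangle$: the lifts $U$ of $\nu$ are the $\theta_{m}(q^{2})-\theta_{m-1}(q^{2})=q^{2m}$ hyperplanes of $\mu_{0}$ missing $R$, each contained in $q+1$ generators (Lemma \ref{basiccounting2} with $k=m-1$), of which exactly $\mu_{0}$ passes through $R$, giving fiber size $q^{2m}\cdot q=q^{2m+1}$. Iterating the recursion $i+1$ times down to the base case and checking the exponent identity $(i+1)(2m+1-i)+\binom{m-i}{2}=(m+1)^{2}-\binom{m-i+1}{2}$ completes the proof. The main obstacle is verifying that skew-ness is preserved by the fibering and extracting the fiber count cleanly; once the quotient setup is fixed, everything else follows from earlier lemmas.
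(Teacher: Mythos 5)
Your proof is correct, and it reaches the formula by the same overall skeleton as the paper (reduce $N(n,i)$ to $N'(n-2,i)$ via Lemma \ref{NlinkedN'}, anchor the case $i=-1$ at $c_{m,m}$ from Lemma \ref{cnj}, and iterate the recursion $N'(m,i)=q^{2m+1}N'(m-1,i-1)$); the exponent identities you quote, $(m+1)^2-\binom{m+2}{2}=\binom{m+1}{2}$ and $(i+1)(2m+1-i)+\binom{m-i}{2}=(m+1)^2-\binom{m-i+1}{2}$, both check out. Where you genuinely diverge is in the proof of the recursion itself. The paper double-counts the incidences $(R,\mu)$ with $\mu$ a generator skew to $\pi$ and $R\in\mu\cap(H\setminus\langle\pi,\pi^{\sigma}\rangle)$: it needs the two auxiliary counts $q^{2(n-i)}\theta_{i}(q^{2})$ (points of a skew generator outside $\pi_{i}^{\sigma}$) and $q^{4n-2i+1}\theta_{i}(q^{2})$ (points of $H$ outside $\pi_{i}^{\sigma}$), plus the observation that through each such point the skew generators are counted by $N'(n-1,i-1)$ in the quotient at that point. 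You instead quotient at a point $R$ of the vertex $\pi_{i}$ itself and exhibit the skew generators as a fibration over the generators of $\mathcal{H}(2m-1,q^{2})$ skew to $\bar\pi$, with constant fiber $q^{2m}\cdot q$ coming from choosing a hyperplane $U$ of $\mu_{0}=\langle R,\nu\rangle$ off $R$ and then one of the $q$ generators through $U$ other than $\mu_{0}$. Your route avoids the inclusion--exclusion point counts entirely, at the cost of the lifting analysis (that $\mu\cap\pi=U\cap\pi$ because $\pi\subset R^{\sigma}$, that $\mu\supseteq U$ with $R\in\mu$ forces $\mu=\mu_{0}$, and that $\mu\mapsto\mu\cap R^{\sigma}$ makes the count injective); all of these you either state or they follow immediately, so the argument is sound. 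Both approaches need $i\geq 0$ for the recursion (you need a vertex point to quotient at; the paper needs $\langle\pi,\pi^{\sigma}\rangle$ to be a proper subspace), and both terminate at the same base case, so the scope is identical. The only step you leave at the level of assertion is that $\bar\pi=\pi\cap\tau$ has intersection parameters $(m-1,i-1)$, but this is exactly the kind of hyperplane-section-of-a-cone verification the paper itself performs without further detail, so it is at the same level of rigor.
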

\begin{proof}
  We prove this theorem using induction. Using Lemma \ref{NlinkedN'}, we know that $N(n,-1)$ equals $N'(n-2,-1)$, the number of generators of a Hermitian variety $H'\cong\mathcal{H}(2n-3,q^{2})$ skew to an $(n-2)$-space intersecting $H'$ in a Hermitian variety $\mathcal{H}(n-2,q^{2})$, if $n\geq2$. This number equals $c_{n-2,n-2}$. Hence, by Lemma \ref{cnj},
  \[
    N(n,-1)=q^{\binom{n-1}{2}}\prod^{n-1}_{l=1}(q^{l}-(-1)^{l})=q^{(n-1)^{2}-\binom{n-(-1)-1}{2}}\prod^{n-(-1)-2}_{j=1}(q^{j}-(-1)^{j}),
  \]
  which proves the induction base for $n\geq2$. If $n=1$, it is easy to prove that $N(1,-1)=1$. Hence, the formula holds also in this case.
  \par Now, we will prove that $N(n,i)=q^{2n-3}N(n-1,i-1)$. By Lemma \ref{NlinkedN'}, this is equivalent to proving that $N'(n,i)=q^{2n+1}N'(n-1,i-1)$. Consider the set $S=\{(R,\mu)\mid R\in\mu,\mu \text{ a generator skew to }\pi, R\notin\langle\pi,\pi^{\sigma}\rangle=\pi^{\sigma}_{i}\}$. This subspace $\pi^{\sigma}_{i}$ intersects $H$ in a cone $\pi_{i}H_{2(n-i)-1}$. We will count $|S|$ in two ways.
  \par On the one hand, there are $N'(n,i)$ generators skew to $\pi$. Fix such a generator $\mu$. Then $\dim(\mu\cap\pi^{\sigma}_{i})=n-i-1$ since $\dim(\mu\cap\pi^{\sigma}_{i})+\dim(\langle\mu,\pi_{i}\rangle)=2n$. So, $\mu$ contains precisely $\theta_{n}(q^{2})-\theta_{n-i-1}(q^{2})=q^{2(n-i)}\theta_{i}(q^{2})$ points of $\PG(2n+1,q^{2})\setminus\pi^{\sigma}_{i}$. Consequently, $|S|=q^{2(n-i)}\theta_{i}(q^{2})N'(n,i)$.
  \par On the other hand, there are $\mu_{2n+1}(q^{2})-\theta_{i}(q^{2})-\mu_{2(n-i)-1}(q^{2})-(q^{2}-1)\theta_{i}(q^{2})\mu_{2(n-i)-1}(q^{2})=q^{4n-2i+1}\theta_{i}(q^{2})$ points in $H\setminus\pi^{\sigma}_{i}$. Fix such a point $P$. The hyperplane $P^{\sigma}$ intersects $\pi$ in an $(n-1)$-space $V$ and intersects $\pi_{i}$ in an $(i-1)$-space $\pi_{i-1}\subset V$. Hence, the intersection $V\cap H$ has intersection parameters $(n-1,i-1)$. The intersection $P^{\sigma}\cap H$ is a cone $PH_{2n-1}$, with $H_{2n-1}\cong\mathcal{H}(2n-1,q^{2})$. Let $\tau$ be the $(2n-1)$-space containing $H_{2n-1}$. We can choose $\tau$ such that it contains $V$. Then, there is a 1-1 correspondence between the generators of $\mathcal{H}(2n+1,q^{2})$ through $P$, skew to $\pi$ and the generators of $H_{2n-1}$ skew to $V$. Consequently, there are $N'(n-1,i-1)$ such generators. Thus, $|S|=q^{4n-2i+1}\theta_{i}(q^{2})N'(n-1,i-1)$.
  \par Comparing both expressions for $|S|$,  we find the desired relation between $N'(n,i)$ and $N'(n-1,i-1)$. An easy calculation now finishes the proof.
\end{proof}

\begin{lemma}\label{generatorsthroughpoint}
  Assume $n\geq2$ and $-1\leq i\leq n-2$. Let $P$ be a point of $H\setminus(\pi\cup\pi^{\sigma})$. Let $n_{P}(n,i)$ be the number of generators through $P$ intersecting both $\pi\setminus\pi_{i}$ and $\pi^{\sigma}\setminus\pi_{i}$ in precisely one point. Then,
  \begin{itemize}
    \item $n_{P}(n,i)=N(n-1,i-1)q^{4i}\left(\mu_{n-i-1}(q^{2})\right)^{2}$ if $P\notin\langle\pi,\pi^{\sigma}\rangle=\pi^{\sigma}_{i}$;
    \item $n_{P}(n,i)=N(n-1,i)q^{4i+4}\left(\mu_{n-i-2}(q^{2})\right)^{2}$ if $P\in\langle\pi,\pi^{\sigma}\rangle=\pi^{\sigma}_{i}$ but $P$ does not lie on a line of $H$ through a point of $\pi\setminus\pi_{i}$ and a point of $\pi^{\sigma}\setminus\pi_{i}$;
    \item $n_{P}(n,i)=N(n-1,i+1)q^{4i+4}\mu_{n-i-3}(q^{2})\left[q^{4}\mu_{n-i-3}(q^{2})+q^{2}-1\right]$ if $P\in\langle\pi,\pi^{\sigma}\rangle=\pi^{\sigma}_{i}$ and $P$ lies on a line of $H$ through a point of $\pi\setminus\pi_{i}$ and a point of $\pi^{\sigma}\setminus\pi_{i}$, and $i\leq n-4$.
    \item $n_{P}(n,i)=q^{2i+2}N(n,i)$ if $P\in\langle\pi,\pi^{\sigma}\rangle=\pi^{\sigma}_{i}$ and $P$ lies on a line of $H$ through a point of $\pi\setminus\pi_{i}$ and a point of $\pi^{\sigma}\setminus\pi_{i}$, and $i=n-3,n-2$.
  \end{itemize}
  The first case can only occur if $i\geq0$. The second case can only occur if $i\leq n-3$. 
\end{lemma}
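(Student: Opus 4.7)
The plan is to adapt the reduction technique of Lemmas \ref{cnj} and \ref{formuleN}: since $P\in H$, the hyperplane $P^{\sigma}$ meets $H$ in the cone $PH_{2n-1}$ whose non-singular base $H_{2n-1}\cong\mathcal{H}(2n-1,q^{2})$ lies in a $(2n-1)$-space $\tau\subset P^{\sigma}$ with $P\notin\tau$, and the map $\mu\mapsto\mu':=\mu\cap\tau$ is a bijection from the generators of $H$ through $P$ onto the generators of $H_{2n-1}$ in $\tau$. Writing $V:=P^{\sigma}\cap\pi$ and $V':=P^{\sigma}\cap\pi^{\sigma}$, which are $(n-1)$-spaces, the condition $|\mu\cap(\pi\setminus\pi_{i})|=|\mu\cap(\pi^{\sigma}\setminus\pi_{i})|=1$ translates (using the dimension argument that an intersection of a positive-dimensional subspace with the complement of $\pi_{i}$ cannot be a single point) into $\mu'\cap V$ and $\mu'\cap V'$ being single points in the right cones, which one controls through the intersection parameters of $V\cap H$ and the counting functions $N(n-1,\cdot)$ applied to $H_{2n-1}$. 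A direct computation from $P\cdot P=Q_{0}\cdot Q_{0}+Q_{0}'\cdot Q_{0}'=0$, valid because the cross term vanishes by polarity of $\pi$ and $\pi^{\sigma}$, shows that the projections $Q_{0}\in\pi$, $Q_{0}'\in\pi^{\sigma}$ of $P$ modulo $\pi_{i}$ either both lie on $H$ (Cases 3 and 4) or neither does (Case 2), and the former is equivalent to the existence of a bad line through $P$ and to $P\in V+V'$.

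In Cases 1 and 2 we have $P\notin V+V'$, so $\tau$ may be chosen to contain both $V$ and $V'$; then the induced polarity $\sigma'$ on $\tau$ satisfies $V^{\sigma'}=V'$, and $\mu\cap\pi=\mu'\cap V$, $\mu\cap\pi^{\sigma}=\mu'\cap V'$. In Case 1, $\pi_{i}\not\subseteq P^{\sigma}$, and Lemma \ref{kegeltopalg} applied in $\tau$ yields that $V\cap H$ is a cone with vertex $\pi_{i-1}:=V\cap\pi_{i}$ and non-singular base $\mathcal{H}(n-i-1,q^{2})$; summing $N(n-1,i-1)$ over the $q^{2i}\mu_{n-i-1}(q^{2})$ choices of each of $Q$ and $Q'$ yields the first formula. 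In Case 2, $\pi_{i}\subseteq V$ and the hyperplane $V\cap\beta$ of a complement $\beta$ of $\pi_{i}$ in $\pi$ (chosen so that $H_{n-i-1}\subset\beta$) is non-tangent to $H_{n-i-1}$, so $V\cap H$ is a cone with vertex $\pi_{i}$ and non-singular base $\mathcal{H}(n-i-2,q^{2})$; the analogous bookkeeping with parameters $(n-1,i)$ and factor $N(n-1,i)$ gives $q^{4(i+1)}\mu_{n-i-2}(q^{2})^{2}N(n-1,i)$.

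In Cases 3 and 4 one has $P\in V+V'$, so $\tau$ cannot simultaneously contain $V$ and $V'$. Keeping $V\subseteq\tau$, the hyperplane $V\cap\beta$ is tangent to $H_{n-i-1}$ at $Q_{0}$, so $V\cap H$ is the cone $\pi_{i+1}H_{n-i-3}$ with vertex $\pi_{i+1}:=\langle\pi_{i},Q_{0}\rangle$ and non-singular base $\mathcal{H}(n-i-3,q^{2})$ (which is empty in Case 4). One checks that $\pi_{i+1}\subseteq\pi\cap H$ and $\pi_{i+1}':=\langle\pi_{i},Q_{0}'\rangle\subseteq\pi^{\sigma}\cap H$, and that the $(i+2)$-space $\Sigma:=\langle\pi_{i+1},\pi_{i+1}'\rangle$ is totally isotropic, so $\Sigma\subseteq H$ and every line from $\pi_{i+1}\setminus\pi_{i}$ to $\pi_{i+1}'\setminus\pi_{i}$ through $P$ is determined by its starting point. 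In Case 3 one splits the intersection point $Q\in V\cap H\setminus\pi_{i}$ according as $Q$ belongs to the cone body $V\cap H\setminus\pi_{i+1}$ (of size $q^{2(i+2)}\mu_{n-i-3}(q^{2})$) or to the extended vertex $\pi_{i+1}\setminus\pi_{i}$ (of size $q^{2(i+1)}$); contributions with both $Q,Q'$ in the cone bodies reduce to the standard count $q^{4(i+2)}\mu_{n-i-3}(q^{2})^{2}N(n-1,i+1)$, while the remaining contributions correspond to configurations where $P,Q,Q'$ are collinear on a line $\ell\subset H$, and a secondary reduction through $\ell^{\sigma}$ to a non-singular $\mathcal{H}(2n-3,q^{2})$ (together with Lemma \ref{basiccounting2} applied to generators through $\ell$) produces the remaining $q^{4(i+1)}(q^{2}-1)\mu_{n-i-3}(q^{2})N(n-1,i+1)$ term, matching the third formula. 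Case 4 is the degenerate case where the cone body vanishes; only the collinear configurations survive, and the same line reduction yields $q^{2(i+1)}N(n,i)$.

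The main obstacle is the structural difference in Cases 3 and 4 caused by the relation $P\in V+V'$: it prevents placing both $V$ and $V'$ inside the same $\tau$, it produces the extended vertex $\pi_{i+1}\setminus\pi_{i}$ of $V\cap H$, and it necessitates the secondary line reduction $\ell\mapsto\ell^{\sigma}$ to account for the collinear contributions. Tracking the precise combinatorial factors $q^{2}-1$ in Case 3 and $q^{2(i+1)}$ in Case 4 via this secondary reduction is the delicate bookkeeping step of the proof.
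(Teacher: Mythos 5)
Your proposal follows essentially the same route as the paper's proof: the reduction through $P^{\sigma}$ to the base $H_{2n-1}$ with $V=P^{\sigma}\cap\pi$ and $V'=P^{\sigma}\cap\pi^{\sigma}$, the dichotomy on whether the components of $P$ in $\pi$ and $\pi^{\sigma}$ modulo $\pi_{i}$ lie on $H$ (equivalently whether $P\in\langle V,V'\rangle$), and in Cases 3--4 the split into collinear configurations, handled via the secondary reduction through $\ell^{\sigma}$, versus the generic count $q^{4i+8}\mu_{n-i-3}(q^{2})^{2}N(n-1,i+1)$. The only cosmetic differences are that the paper organizes Case 3 by whether the generator contains one of the $q^{2i+2}$ lines of $H$ through $P$ meeting both spaces, obtaining $q^{2i+2}N(n,i)$ for that part and then converting via the identity $N(n,i)=q^{2i+2}(q^{2}-1)\mu_{n-i-3}(q^{2})N(n-1,i+1)$, and that the auxiliary count needed in the line reduction is the ``skew to a subspace'' count in the spirit of Lemmas \ref{cnj} and \ref{NlinkedN'} rather than Theorem \ref{basiccounting2}.
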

\begin{proof}
  Since $P\notin\pi\cup\pi^{\sigma}$, $P^{\sigma}\cap\pi=V$ is an $(n-1)$-space and $P^{\sigma}\cap\pi^{\sigma}=V'$ is an $(n-1)$-space. Furthermore $P^{\sigma}\cap H$ is a cone $PH_{2n-1}$ with $H_{2n-1}\cong\mathcal{H}(2n-1,q^{2})$. Let $\tau$ be the $(2n-1)$-space containing $H_{2n-1}$.
  \par First we consider the case $P\notin\langle\pi,\pi^{\sigma}\rangle=\pi^{\sigma}_{i}$. In this case $P^{\sigma}$ intersects $\pi_{i}$ in an $(i-1)$-space $\pi_{i-1}=V\cap V'$. Also, $\tau$ can be chosen so that it contains $V$ and $V'$. Hence, the number of generators through $P$ fulfilling the requirements equals the number of generators of $H_{2n-1}$ intersecting $V$ and $V'$ in a point. Let $\sigma'$ be the polarity of $\tau$ corresponding to $H_{2n-1}$. Analogously to the argument in the proof of Lemma \ref{NlinkedN'}, it can be seen that $V'=V^{\sigma'}$. Consequently there are $N(n-1,i-1)$ generators of this type through a fixed point of $V\setminus\pi_{i-1}$ and a fixed point of $V'\setminus\pi_{i-1}$. There are $q^{2i}\mu_{n-i-1}(q^{2})$ possible choices for each of these points. The first part of the lemma follows. Note that $\langle\pi,\pi^{\sigma}\rangle=\PG(2n+1,q^{2})$ if $i=-1$. Hence, this case cannot occur if $i=-1$.
  \par We fix some notation for the remaining cases. Let $W\subseteq\pi$ and $W'\subseteq\pi^{\sigma}$ be the $(n-i-1)$-spaces containing $H_{n-i-1}$ and $H'_{n-i-1}$, respectively. Furthermore, let $\overline{\sigma}$ and $\overline{\sigma}'$ be the polarities of $W$ and $W'$, respectively corresponding to $H_{n-i-1}$ and $H'_{n-i-1}$. In all three remaining cases, $\pi_{i}\subset P^{\sigma}$, hence $P^{\sigma}\cap W=W_{1}$ and $P^{\sigma}\cap W'=W'_{1}$ are $(n-i-2)$-spaces. Now, the point $P$ can be written in a unique way as $P=\lambda P_{\pi_{i}}+\lambda_{P}P_{W}+P_{W'}$, with $P_{W}\in W$, $P_{W'}\in W'$, $P_{\pi_{i}}\in\pi_{i}$ and $\lambda,\lambda_{P}\in\F_{q^{2}}$. Arguing as in the proof of Lemma \ref{cnj} we can see that $W_{1}=P^{\sigma}\cap W=P^{\overline{\sigma}}_{W}$ and that $W'_{1}=P^{\sigma}\cap W'=P^{\overline{\sigma}'}_{W'}$. Moreover, since $P$ and $P_{\pi_{i}}$ are contained in $P^{\sigma}$, neither or both of $P_{W}$ and $P_{W'}$ are contained in $P^{\sigma}$. Hence, we need to distinguish two cases.
  \begin{itemize}
    \item $P_{W}\in W_{1}$ and $P_{W'}\in W'_{1}$ are both contained in $P^{\sigma}$; consequently, $P_{W}\in P^{\overline{\sigma}}_{W}$, thus $P_{W}\in H_{n-i-1}\subset H$ and $P^{\overline{\sigma}}_{W}\cap H_{n-i-1}$ is a cone $P_{W}H_{n-i-3}$, with $H_{n-i-3}\cong\mathcal{H}(n-i-3,q^{2})$. Let $W_{2}\subset W_{1}$ be the $(n-i-3)$-space containing $H_{n-i-3}$. Then, the intersection of $V=\langle\pi_{i},W_{1}\rangle$ and $H$ is the cone with vertex $\langle\pi_{i},P_{W}\rangle$ and base $H_{n-i-3}$. Analogously we introduce $H'_{n-i-3}\subset W'_{2}\subset W'_{1}$. Then $V'\cap H$ is the cone with vertex $\langle\pi_{i},P_{W'}\rangle$ and base $H'_{n-i-3}$. Furthermore, since $P_{W}\in V$, $P_{W'}\in V'$, and $P_{\pi_{i}}\in V,V'$, $P$ is contained in $\langle V,V'\rangle$. Also, the line $PP_{W}$ is contained in $P^{\sigma}$ and is not a $1$-secant since $P,P_{W}\in H$, hence it is a line of $H$. This line intersects $\pi^{\sigma}$ in a point of $\langle P_{W'},\pi_{i}\rangle\setminus\pi_{i}$.
    \item $P_{W}\notin W_{1}$ and $P_{W'}\notin W'_{1}$ are both not contained in $P^{\sigma}$; consequently, $P_{W}\notin P^{\overline{\sigma}}_{W}$, thus $P_{W}\notin H_{n-i-1}$, $P_{W}\notin H$ and $P^{\overline{\sigma}}_{W}\cap H_{n-i-1}$ is a non-singular Hermitian variety $H_{n-i-2}\cong\mathcal{H}(n-i-2,q^{2})$ in $W_{1}$. Then, the intersection of $V=\langle\pi_{i},W_{1}\rangle$ and $H$ is the cone $\pi_{i}H_{n-2-i}$. Analogously we introduce $H'_{n-i-2}\subset W'_{1}$. The intersection $V'\cap H$ is the cone $\pi_{i}H'_{n-i-2}$. Furthermore, $P\notin\langle V,V'\rangle$ since $P_{W}\notin W_{1}$ and $P_{W'}\notin W'_{1}$. Also, all lines in $\pi^{\sigma+}_{i}$ through $P$ intersecting $\pi\setminus\pi_{i}$ and $\pi^{\sigma}\setminus\pi_{i}$, are contained in $\langle P_{W},P_{W'},\pi_{i}\rangle$, but not in $\langle P,\pi_{i}\rangle$. Since $P_{W}, P_{W'}\notin P^{\sigma}$, none of the lines through $P$ can be contained in $H$.
  \end{itemize}
  These two cases clearly correspond to the three remaining cases of the lemma. We will treat them separately.
  \par First of all, we look at the latter, which is the second case in the statement of the lemma. Since $P\notin\langle V,V'\rangle$, we can choose $\tau$ such that it contains $\langle V,V'\rangle$. Hence, every generator through $P$, intersecting both $\pi\setminus\pi_{i}$ and $\pi^{\sigma}\setminus\pi_{i}$ in a point, corresponds to a generator of $H_{2n-1}$ intersecting both $V\setminus\pi_{i}$ and $V'\setminus\pi_{i}$ in a point, and vice versa. For a fixed point in $V\setminus\pi_{i}$ and a fixed point in $V'\setminus\pi_{i}$, there are $N(n-1,i)$ such generators. We also know that $|V\setminus\pi_{i}|=|V'\setminus\pi_{i}|=q^{2i+2}\mu_{n-i-2}(q^{2})$. The second part of the lemma follows. Note that $V\setminus\pi_{i}$ and $V'\setminus\pi_{i}$ are empty if $i=n-2$. Hence, this case only occurs if $i\leq n-3$.
  \par Finally, we look at the former case, the third and the fourth case in the statement of the lemma. Let $\ell$ be a line on $H$ through $P$, a point of $\pi\setminus\pi_{i}$ and a point of $\pi^{\sigma}\setminus\pi_{i}$. By changing, if necessary, the choices for $W$ and $W'$, we can assume $\ell=P_{W}P_{W'}$. We distinguish between two types of generators: the ones that contain $\ell$ and the ones that do not contain $\ell$. First we look at the ones that contain $\ell$. We know $\ell^{\sigma}\cap H$ is a cone with vertex $\ell$ and base $H_{2n-3}\cong\mathcal{H}(2n-3,q^{2})$. Let $\tau'$ be the $(2n-3)$-space containing $H_{2n-3}$. We can choose $\tau'$ so that it contains $\pi_{i}$, $W_{2}$ and $W'_{2}$. As before, one can see that $\langle\pi_{i},W_{2}\rangle^{\widehat{\sigma}'}=\langle\pi_{i},W'_{2}\rangle$, with $\widehat{\sigma}'$ the polarity of $\tau'$ corresponding to $H_{2n-3}$. The number of generators of the requested type through $\ell$ then equals the number of generators of $H_{2n-3}$ skew to $\langle\pi_{i},W_{2}\rangle$. This number equals $N'(n-2,i)=N(n,i)$. Furthermore, since $\ell$ is a line on $H$ through $P$ intersecting $\pi\setminus\pi_{i}$ and $\pi^{\sigma}\setminus\pi_{i}$, every line through $P$ and a point of $\langle P_{W},\pi_{i}\rangle\setminus\pi_{i}$ lies on $H$ and intersects $\langle P_{W'},\pi_{i}\rangle\setminus\pi_{i}\subset\pi^{\sigma}\setminus\pi_{i}$. Thus, there are $\theta_{i+1}(q^{2})-\theta_{i}(q^{2})=q^{2i+2}$ such lines. Hence, there are $q^{2i+2}N(n,i)$ generators of the first type. Now, we assume no line through $P$, intersecting $\pi$ and $\pi^{\sigma}$, is contained in the generator. Let $Q_{W}$ and $Q_{W'}$ be the points of the generator in $W$ and $W'$, respectively. By the previous remarks on this case, we know there are $\mu_{n-i-3}(q^{2})q^{2i+4}$ possible choices for $Q_{W}$ and for $Q_{W'}$. Now, we consider the plane $\langle P,Q_{W},Q_{W'}\rangle$. Using arguments, similar to the ones in the previous case, we find $N'(n-3,i+1)=N(n-1,i+1)$ generators fulfilling the requirements for every choice of $Q_{W}$ and $Q_{W'}$. Hence, the total number of generators in this third case equals
  \begin{align*}
    n_{P}&=q^{2i+2}N(n,i)+\left(\mu_{n-i-3}(q^{2})q^{2i+4}\right)^{2}N(n-1,i+1)\\
    &=\left[q^{2i+2}q^{2i+2}(q^{2}-1)\mu_{n-i-3}(q^{2})+\left(\mu_{n-i-3}(q^{2})q^{2i+4}\right)^{2}\right]N(n-1,i+1)\\
    &=q^{4i+4}\mu_{n-i-3}(q^{2})\left[q^{2}-1+q^{4}\mu_{n-i-3}(q^{2})\right]N(n-1,i+1)\;.\\
  \end{align*}
  Hereby we used the relation between $N(n,i)$ and $N(n-1,i+1)$ which can immediately be derived from Lemma \ref{formuleN}.
  \par Note that $V\setminus\langle\pi_{i},P_{W}\rangle$ and $V'\setminus\langle\pi_{i},P_{W'}\rangle$ are empty if $n-3\leq i\leq n-2$. In this case, we cannot consider the points $Q_{W}$ and $Q_{W'}$. So, there are no generators of the second type. Consequently, all generators are of the first type and there are precisely $q^{2i+2}N(n,i)$ such generators.
\end{proof}

\section{Classifying the small weight code words}

Before giving the new classification theorem, we state the results about the codes $C_{1}(\mathcal{H}(3,q^{2}))^{\bot}$ and $C_{2}(\mathcal{H}(5,q^{2}))^{\bot}$ to which we referred earlier.

\begin{theorem}[{\cite[Proposition 3.7]{kimms}}]
  Let $C$ be the code $C_{1}(\mathcal{H}(3,q^{2}))^{\bot}$. There is only one non-trivial type of code words among the ones described in Example \ref{possibilities}, namely $i=-1$. These are the code words of minimal weight. Let $c$ be a code word of $C$ with $\wt(c)\leq \frac{\sqrt{q}(q+1)}{2}$. Then $c$ is a linear combination of code words of minimal weight.
\end{theorem}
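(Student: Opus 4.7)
The plan is to dispatch the three assertions of the theorem separately: the enumeration via Example \ref{possibilities} applied with $n=1$, the minimality via Theorem \ref{brol}, and the linear-combination statement by an induction that peels off minimum-weight code words.

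For the enumeration I would simply specialise Example \ref{possibilities} to $n=1$. The parameter $i$ can only take the values $-1,0,1$; the cases $i=0=n-1$ and $i=1=n$ fall into the last two bullets and produce the zero code word, while the range $0\le i\le n-2$ of the second bullet is vacuous. Hence only $i=-1$ is non-trivial, yielding code words of weight $2\mu_{1}(q^{2})=2(q+1)$. These realise the minimum distance at once from Theorem \ref{brol}: every non-zero code word of weight strictly less than $3q$ has weight exactly $2(q+1)$, and for $q\ge 3$ the inequality $2(q+1)<3q$ places the construction inside the classified range.

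The substance lies in the linear-combination assertion, which I would approach by induction on $\wt(c)$. Given $c\neq 0$ with $\wt(c)\le\sqrt{q}(q+1)/2$, the goal is to locate a line $\pi\subset\PG(3,q^{2})$ with $\pi\cap\pi^{\sigma}=\emptyset$ such that both $\pi\cap\mathcal{H}(3,q^{2})$ and $\pi^{\sigma}\cap\mathcal{H}(3,q^{2})$ are contained in $\supp(c)$, with $c$ taking some fixed value $\alpha\in\F_{p}^{*}$ on the first $(q+1)$-arc and $-\alpha$ on the second. Once this is achieved, $c-\alpha(v_{\pi}-v_{\pi^{\sigma}})$ is a strictly shorter code word by Theorem \ref{codewords}, and the induction terminates at the zero code word. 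The matching of values with opposite sign is forced by Lemma \ref{intersectgenerator}: every generator $\mu$ meeting $\pi$ in a point meets $\pi^{\sigma}$ in a point as well, and the generator conditions $v_{\mu}\cdot c=0$ then tie $c$-values on the two arcs together.

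The main obstacle is producing the pair $(\pi,\pi^{\sigma})$ with the required support inclusion. My plan is a double-counting argument on incidences between $\supp(c)$ and secant lines of $\mathcal{H}(3,q^{2})$: each support point lies on $q^{4}$ secants, and each secant meets $\mathcal{H}$ in $q+1$ points. The bound $\sqrt{q}(q+1)/2$ is calibrated so that a Cauchy--Schwarz / pigeonhole estimate forces some secant $\pi$ to have all of its $q+1$ Hermitian points inside $\supp(c)$ and with $c$ constant on them; the polar partner $\pi^{\sigma}$ is then also a secant by Lemma \ref{kegeltopalg} (case $k=1$, $i=-1$) and inherits the analogous property from Lemma \ref{intersectgenerator} together with the generator conditions. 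Keeping track of the \emph{values} of $c$ (not merely its support) through this averaging, and understanding why the square-root factor is the right threshold, is the delicate point that any self-contained proof of \cite[Proposition 3.7]{kimms} must address.
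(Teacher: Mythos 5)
This theorem is not proved in the paper at all: it is the cited result \cite[Proposition 3.7]{kimms}, i.e.\ a restatement of Theorem \ref{brol} in the language of Example \ref{possibilities}. The only content the paper adds is the specialisation of Example \ref{possibilities} to $n=1$, which you carry out correctly: the second bullet is vacuous, the third and fourth give the zero word, so $i=-1$ is the only non-trivial case, with weight $2\mu_{1}(q^{2})=2(q+1)$; combined with the first sentence of Theorem \ref{brol} (every nonzero word of weight $<3q$ has weight exactly $2(q+1)$, and $2(q+1)<3q$ for $q>2$) this settles the enumeration and the minimality. The linear-combination assertion is then verbatim the second sentence of Theorem \ref{brol}, so the correct (and only available) move is to cite it, exactly as you already do for minimality. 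Your first two paragraphs therefore constitute a complete proof relative to what the paper takes as given.

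Your last two paragraphs, which attempt to reprove that second sentence from scratch, contain a genuine gap. The entire difficulty of \cite[Proposition 3.7]{kimms} is concentrated in producing, from $\wt(c)\le\sqrt{q}(q+1)/2$ alone, a non-tangent line $\pi$ whose $q+1$ Hermitian points all lie in $\supp(c)$ with $c$ constant on them; you assert that a Cauchy--Schwarz/pigeonhole count over secants delivers this, but it cannot. With $|\supp(c)|$ of order $q^{3/2}$ there are only about $q^{3}/8$ pairs of support points, each secant carries $\binom{q+1}{2}\approx q^{2}/2$ collinear point-pairs, and through a fixed support point there are $q^{4}$ secants sharing at most $|\supp(c)|-1\approx q^{3/2}$ further support points among them, so the average secant through a support point contains far fewer than $q+1$ support points; no first-moment argument forces a single secant to be entirely filled, let alone with constant value. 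The $\sqrt{q}$ threshold in \cite{kimms} comes from a structural argument exploiting the generalized-quadrangle geometry of $\mathcal{H}(3,q^{2})$ and the simultaneous modular conditions from all generators, not from averaging. You acknowledge this step is ``the delicate point,'' but as written the reduction step of your induction is unproven; either delete those paragraphs and rely on Theorem \ref{brol}, or supply the genuinely harder argument.
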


\begin{theorem}[{\cite[Theorem 43]{psv}}]
  Let $c$ be a code word of $C_{2}(\mathcal{H}(5,q^{2}))^{\bot}$, $q>893$, with $\wt(c)\leq 2(q^{3}+q)$, then $c$ is a code word of one of the types described in Theorem \ref{codewords}. Regarding Example \ref{possibilities}, we know that there are precisely two possibilities since $n=2$, namely $i=-1$ and $i=0$.
\end{theorem}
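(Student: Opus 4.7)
The strategy is a projection-reduction argument that exploits Theorem~\ref{brol} (the $n=1$ case) together with the counting estimates of Section~3 to force $\supp(c)$ into one of the two geometric shapes listed in Example~\ref{possibilities} for $n=2$.

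First, pick a point $P\in\supp(c)$ and consider the polar hyperplane $P^{\sigma}$. By Lemma~\ref{kegeltopalg} it meets $\mathcal{H}(5,q^{2})$ in a cone $PH_{3}$ with $H_{3}\cong\mathcal{H}(3,q^{2})$; the lines of $\mathcal{H}(5,q^{2})$ through $P$ are in bijection with the points of $H_{3}$, and the generators of $\mathcal{H}(5,q^{2})$ through $P$ are in bijection with the generators (lines) of $H_{3}$ by Theorem~\ref{basiccounting2}. Define an induced word $c_{P}$ on $H_{3}$ by $c_{P}(Q)=\sum_{R\in \ell_{Q}\cap\mathcal{H},\,R\ne P}c(R)$, where $\ell_{Q}$ is the line of $\mathcal{H}$ corresponding to $Q$. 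The codeword condition on generators of $\mathcal{H}(5,q^{2})$ through $P$ immediately gives $c_{P}\in C_{1}(H_{3})^{\bot}$; moreover each $R\in\supp(c)\setminus\{P\}$ contributes to at most one coordinate of $c_{P}$, so $\wt(c_{P})\le\wt(c)-1$.

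Second, I would use an averaging argument to locate a base point $P$ for which $\wt(c_{P})$ lies below the threshold $\tfrac{\sqrt{q}\,(q+1)}{2}$ of Theorem~\ref{brol}. Summing $\wt(c_{P})$ over all $P\in\supp(c)$ counts ordered pairs $(P,R)$ of support points joined by a line of $\mathcal{H}$; controlling this sum by the counting lemmas of Section~3 (applied with $n=2$) forces many $P$ to have small $\wt(c_{P})$, or else yields a rigid combinatorial structure already. Applying Theorem~\ref{brol}, such a $c_{P}$ must be an $\F_{p}$-linear combination of codewords $v_{\ell}-v_{\ell^{\sigma}}$ on $H_{3}$; pulling each line $\ell$ back through $P$ produces a plane $\pi$ of $\PG(5,q^{2})$ through $P$ whose contribution to $c$ matches a codeword from Theorem~\ref{codewords}.

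Finally, the main obstacle is to promote these local decompositions into a single global one, i.e.\ to show that the plane $\pi$ above does not depend on the choice of $P$. The bound $2(q^{3}+q)$ is essential here: a codeword of type $i=0$ from Example~\ref{possibilities} has weight $2(q^{3}+q^{2})>2(q^{3}+q)$ and is therefore excluded, while any nontrivial $\F_{p}$-combination of two distinct type $i=-1$ codewords inflates the total weight past $2(q^{3}+q)$ because the intersection of two generic plane sections of $\mathcal{H}(5,q^{2})$ is too small to allow enough cancellation. Making these inequalities fully quantitative is the technically heaviest part of the proof, and it is precisely here that the threshold $q>893$ appears; the counting machinery of Lemmas~\ref{cnj}, \ref{formuleN} and~\ref{generatorsthroughpoint}, specialized to $n=2$, supplies the ingredients needed to close the argument.
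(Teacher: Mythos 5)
First, a structural point: the paper does not actually prove this statement --- it is quoted verbatim from \cite[Theorem 43]{psv} as a known result, and the machinery of Section 4 (Lemmas \ref{isotrope}--\ref{baselemma}, Theorem \ref{maintheorema}) is the \emph{generalization} of that argument to arbitrary $n$, not a proof of the $n=2$ case. So the relevant comparison is with the strategy of \cite{psv} and its generalization here, which is quite different from yours.

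Your reduction to Theorem \ref{brol} has a genuine gap. A minor issue first: with your definition $c_{P}(Q)=\sum_{R\in\ell_{Q}\cap\mathcal{H},R\neq P}c(R)$, a generator $\mu$ through $P$ gives $\sum_{Q\in\mu'}c_{P}(Q)=-c(P)\neq 0$, so $c_{P}\notin C_{1}(H_{3})^{\bot}$; repairing this by adding $c(P)$ to every coordinate blows $\wt(c_{P})$ up to roughly $|H_{3}|\approx q^{5}$. But even setting that aside, the averaging step cannot succeed: Theorem \ref{brol} requires $\wt(c_{P})\leq\frac{\sqrt{q}(q+1)}{2}=O(q^{3/2})$, whereas by Theorem \ref{pperpcapS} every $P\in\supp(c)$ satisfies $|P^{\sigma}\cap\supp(c)|\geq q^{3}+2$, and already for the minimum-weight codeword $\alpha(v_{\pi}-v_{\pi^{\sigma}})$ with $\pi\cap\pi^{\sigma}=\emptyset$ each point $P\in\pi\cap\mathcal{H}$ lies on exactly $q^{3}+1$ lines of $\mathcal{H}$, each meeting $\pi^{\sigma}\cap\mathcal{H}$ in a single support point (Remark \ref{collinear}), so $\wt(c_{P})=q^{3}+1$ for \emph{every} admissible base point. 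Hence no $P$ with $\wt(c_{P})=O(q^{3/2})$ exists even for the codewords you are trying to characterize, and the escape clause ``or else yields a rigid combinatorial structure already'' is precisely where the entire content of the theorem lives. The argument that actually works proceeds differently: one locates a plane $\pi=P_{1}^{\sigma}\cap P_{2}^{\sigma}\cap P_{3}^{\sigma}$ carrying $\Omega(q^{3})$ points of the support via an inclusion--exclusion/semi-arc argument (Lemma \ref{subspace}), proves a gap lemma forcing $|(\pi\Delta\pi^{\sigma})\cap\supp(c)|$ to be either small or nearly all of $(\pi\Delta\pi^{\sigma})\cap\mathcal{H}$ (Lemma \ref{baselemma}), and then subtracts $\beta(v_{\pi}-v_{\pi^{\sigma}})$ to strictly decrease the weight and inducts. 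Your closing paragraph --- excluding $i=0$ under the bound $2(q^{3}+q)$ and bounding the overlap of two plane-pairs to rule out combinations of two codewords --- is sound and mirrors Remark \ref{maintheoremb}, but it only handles the endgame, not the core reduction.
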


It is our aim to generalise this result. We start our arguments with two lemmas about $n$-spaces: the second lemma shows the existence of an $n$-space containing a non-trivial amount of points of the support of a code word, while the first lemma shows that a generator cannot contain many points of the support of a code word. In the proof of the second lemma we use the following result.

\begin{theorem}\label{pperpcapS}
  Let $c\in C_{n}(\mathcal{H}(2n+1,q^{2}))^{\bot}$ be a code word and denote $\supp(c)=S$. Let $P$ be a point in $S$. Then $|P^{\sigma}\cap S|\geq 2+q^{2n-1}$.
\end{theorem}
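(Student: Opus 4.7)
The strategy is a double count of incidences between generators through $P$ and other points of $S$ lying in the tangent hyperplane $P^{\sigma}$.

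First I would observe that since $P\in S\subseteq\mathcal{H}(2n+1,q^{2})$, every generator through $P$ is contained in $P^{\sigma}$. Pick any such generator $\mu$. The defining relation of a code word gives $\sum_{R\in\mu}c_{R}=0$, and since $c_{P}\neq 0$ the generator $\mu$ must contain at least one further point of $S$. Thus every generator through $P$ contributes at least one point to $S':=(S\cap P^{\sigma})\setminus\{P\}$, and by Theorem \ref{basiccounting2} the number of generators through $P$ equals $\prod_{i=0}^{n-1}(q^{2i+1}+1)$.

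Next I would bound, for a fixed $Q\in S'$, the number of generators through $P$ that contain $Q$. Since $Q\in P^{\sigma}$, the line $PQ$ is tangent to $\mathcal{H}(2n+1,q^{2})$ at $P$; but $Q\in\mathcal{H}(2n+1,q^{2})$ as well, so the line $PQ$ meets the variety in at least two points and must therefore lie entirely on $\mathcal{H}(2n+1,q^{2})$. Applying Theorem \ref{basiccounting2} to this line (a $1$-space on the variety) shows that at most $\prod_{i=0}^{n-2}(q^{2i+1}+1)$ generators contain both $P$ and $Q$.

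Finally I would double count the pairs $(\mu,Q)$ with $\mu$ a generator through $P$ and $Q\in\mu\cap S'$. The two estimates above yield
\[
  |S'|\cdot\prod_{i=0}^{n-2}(q^{2i+1}+1)\;\geq\;\prod_{i=0}^{n-1}(q^{2i+1}+1),
\]
so $|S'|\geq q^{2n-1}+1$ and therefore $|P^{\sigma}\cap S|\geq q^{2n-1}+2$, as required. There is no real obstacle here: the only subtlety is making sure the line $PQ$ lies on the variety so that Theorem \ref{basiccounting2} applies, which follows from the tangent-line dichotomy for Hermitian varieties.
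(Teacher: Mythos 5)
Your argument is correct. Every generator through $P$ lies in $P^{\sigma}$ and, because $\sum_{R\in\mu}c_{R}=0$ with $c_{P}\neq 0$, must pick up a second support point; the tangent-hyperplane intersection $P^{\sigma}\cap\mathcal{H}(2n+1,q^{2})$ is a cone with vertex $P$, so each $Q\in(S\cap P^{\sigma})\setminus\{P\}$ determines a line $PQ$ on the variety, and any generator through $P$ and $Q$ contains that line; the quotient of the two counts from Theorem \ref{basiccounting2} is exactly $q^{2n-1}+1$, giving the claim. Note, however, that the paper does not prove this statement at all: it simply cites it as a special case of an external result (Proposition 9(d) of Pepe--Storme--Van de Voorde). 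So your proof is not a variant of the paper's argument but a self-contained replacement for the citation, built only from the counting results already established in Section 2 (Theorem \ref{basiccounting2}) together with the standard cone structure of tangent hyperplanes. What this buys is independence from the reference and transparency about where the exponent $2n-1$ comes from (it is the last factor $q^{2n-1}+1$ in the generator count); what the citation buys the authors is brevity and access to the presumably more general statement proved there. The only point worth making explicit in a final write-up is the one you already flag: $S$ consists of points of the Hermitian variety, and a point $Q\neq P$ of $P^{\sigma}\cap\mathcal{H}(2n+1,q^{2})$ forces $PQ$ to be a line of the variety because $P^{\sigma}\cap\mathcal{H}(2n+1,q^{2})$ is the cone $PH_{2n-1}$ (Lemma \ref{kegeltopalg}), so Theorem \ref{basiccounting2} applies with $k=1$.
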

\begin{proof}
  This is a special case of \cite[Proposition 9(d)]{psv}.
\end{proof}

Throughout the three following lemmas the value $\Sigma_{n,i}$ is used, $-1\leq i\leq n-2$. It is defined by
\[
  \Sigma_{n,i}=\begin{cases}
                 2q^{2i+2}\mu_{n-i-1}(q^{2})+4\frac{\mu_{n-i-2}(q^{2})(q^{n-i-1}-1)}{q^{n-3i-5}(q^{2}-1)}&n-i\text{ odd}\;,\\
                 2q^{2i+2}\left[\mu_{n-i-1}(q^{2})+2\frac{q^{4}\mu_{n-i-3}(q^{2})+q^{2}-1}{q^{2}-1}\right]&n-i\text{ even}\;.
               \end{cases}\;.
\]
Note that in both cases $\Sigma_{n,i}=2q^{2n-1}+f$, with $f\in \mathcal{O}(q^{2n-2})$ and $f>0$ if $q>0$.

\begin{lemma}\label{isotrope}
  Let $c\in C_{n}(\mathcal{H}(2n+1,q^{2}))^{\bot}$ be a code word with $\wt(c)\leq w=\delta q^{2n-1}$, and denote $\supp(c)=S$. Let $\pi$ be a generator of $\mathcal{H}(2n+1,q^{2})$. Then $|\pi\cap S|\leq\delta\theta_{n-1}(q^{2})$.
\end{lemma}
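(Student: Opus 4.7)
The plan is to bound $t := |\pi \cap S|$ via a double count on pairs $(P,Q) \in T \times (S \setminus \pi)$ with $Q \in P^{\sigma}$, driven by the tangent-hyperplane lower bound of Theorem~\ref{pperpcapS}; here $T := \pi \cap S$. Two polarity facts, both immediate from $\pi = \pi^{\sigma}$ (since $\pi$ is a generator), power the argument: for every $P \in \pi$ one has $\pi \subseteq P^{\sigma}$, and for every $Q \notin \pi$ one has $\pi \not\subseteq Q^{\sigma}$, so $Q^{\sigma} \cap \pi$ is a proper hyperplane of $\pi$ and contains exactly $\theta_{n-1}(q^{2})$ points.

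I would then estimate $N := |\{(P,Q) \mid P \in T,\ Q \in S \setminus \pi,\ Q \in P^{\sigma}\}|$ in two ways. Fixing $P \in T$: Theorem~\ref{pperpcapS} gives $|P^{\sigma} \cap S| \geq q^{2n-1}+2$, and since $T \subseteq P^{\sigma} \cap S$ by the first polarity fact, one obtains $|(P^{\sigma} \cap S) \setminus \pi| \geq q^{2n-1}+2-t$; summing over $P \in T$ yields $N \geq t(q^{2n-1}+2-t)$. Fixing $Q \in S \setminus \pi$: the hyperplane fact gives $|T \cap Q^{\sigma}| \leq \theta_{n-1}(q^{2})$, so summing over $Q$ yields $N \leq (|S|-t)\theta_{n-1}(q^{2}) \leq \delta q^{2n-1}\theta_{n-1}(q^{2})$.

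The two estimates combine into the quadratic inequality
\[
  t^{2} - \bigl(q^{2n-1}+2+\theta_{n-1}(q^{2})\bigr) t + |S|\theta_{n-1}(q^{2}) \;\geq\; 0,
\]
whose roots $r_{-} \leq r_{+}$ satisfy $r_{-}r_{+} = |S|\theta_{n-1}(q^{2}) \leq \delta q^{2n-1}\theta_{n-1}(q^{2})$ by Vieta's formulas. A short discriminant computation reduces the estimate $r_{+} \geq q^{2n-1}$ to the inequality $|S|\theta_{n-1}(q^{2}) \leq q^{2n-1}\bigl(2+\theta_{n-1}(q^{2})\bigr)$, which holds once $q$ is large enough relative to $\delta$. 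Since $t \leq |S| \leq \delta q^{2n-1} < r_{+}$, the value $t$ must lie in $[0, r_{-}]$, and then
\[
  t \;\leq\; r_{-} \;\leq\; \frac{r_{-}r_{+}}{q^{2n-1}} \;\leq\; \delta\,\theta_{n-1}(q^{2}),
\]
as desired. The principal subtlety is confining $t$ to the lower branch of the quadratic: this is where the ``$q$ sufficiently large'' hypothesis is essential, ensuring that the upper root $r_{+}$ sits strictly above the trivial bound $|S| \leq \delta q^{2n-1}$ and so above $t$.
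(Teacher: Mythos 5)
Your counting framework is essentially the paper's (a double count of pairs $(P,Q)$ with $P\in\pi\cap S$, $Q\in S\setminus\pi$, $Q\in P^{\sigma}$, with the upper bound $|\pi\cap Q^{\sigma}|=\theta_{n-1}(q^{2})$ per point $Q$), but your lower bound per point $P$ is weaker in a way that breaks the argument exactly in the regime where the lemma is used. From Theorem \ref{pperpcapS} alone you can only guarantee $q^{2n-1}+2-t$ points of $S$ in $P^{\sigma}$ \emph{outside} $\pi$, since all $t$ points of $T$ may a priori be absorbed into the guaranteed $q^{2n-1}+2$. This is what forces the quadratic in $t$, and the branch analysis only closes for $\delta<1$: your reduction of $r_{+}\geq q^{2n-1}$ to $|S|\theta_{n-1}(q^{2})\leq q^{2n-1}(2+\theta_{n-1}(q^{2}))$ is correct, but with $|S|$ as large as $\delta q^{2n-1}$ this reads $(\delta-1)\theta_{n-1}(q^{2})\leq 2$, which for $\delta>1$ \emph{fails} for all large $q$ --- the opposite of your claim that it ``holds once $q$ is large enough relative to $\delta$''. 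Likewise the confinement $t\leq|S|\leq\delta q^{2n-1}<r_{+}$ needs $\delta<1$. The lemma carries no largeness hypothesis on $q$ and is invoked (in Lemma \ref{subspace} and Theorem \ref{maintheorema}) for an arbitrary constant $\delta>0$; since the weights of interest are of order $2q^{2n-1}$ to $4q^{2n-1}$, the case $\delta\geq1$ is the essential one, and your argument does not cover it. (Indeed for $\delta=2$ the smaller root of your quadratic is about $2q^{2n-2}+4q^{2n-3}$, already exceeding $\delta\theta_{n-1}(q^{2})$, so even the final estimate would fail.)

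The missing idea is a lower bound on $|(P^{\sigma}\cap S)\setminus\pi|$ that does not degrade as $t$ grows. The paper obtains $|(P^{\sigma}\cap S)\setminus\pi|\geq q^{2n-1}$ unconditionally: project $S\cap P^{\sigma}$ from $P$ onto a base $H'\cong\mathcal{H}(2n-1,q^{2})$ of the cone $P^{\sigma}\cap\mathcal{H}(2n+1,q^{2})$. The image $S'$ is a blocking set of the generators of $H'$ (every generator through $P$ must contain a second point of $S$), there are $q^{n^{2}}$ generators of $H'$ skew to the image $\pi'$ of $\pi$, and only $q^{(n-1)^{2}}$ of them pass through any fixed point of $H'\setminus\pi'$; hence at least $q^{2n-1}$ points of $S'$ lie off $\pi'$, each lifting to a point of $S\setminus\pi$ collinear with $P$ on the Hermitian variety. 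With this bound the double count gives $xq^{2n-1}\leq\delta q^{2n-1}\theta_{n-1}(q^{2})$ directly: no quadratic, no restriction on $\delta$, and no assumption on $q$. You would need to import this blocking-set step (or an equivalent replacement) to repair your proof.
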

\begin{proof}
  The proof is a generalisation of the proof of \cite[Lemma 41]{psv}.
  \par Denote $x=|\pi\cap S|$ and let $P$ be a point in $\pi\cap S$. Then $P^{\sigma}\cap\mathcal{H}(2n+1,q^{2})$ is a cone with vertex $P$. Let $H'\cong\mathcal{H}(2n-1,q^{2})$ be a base of this cone and consider the projection from $P$ onto $H'$. Denote the projection of $S\cap P^{\sigma}$ by $S'$. The projection of $\pi$ is a generator $\pi'$ of $H'$. Note that $S'$ is a blocking set of the generators on $H'$.
  \par By \cite[Lemma 10]{kms}, we know there are $q^{n^{2}}$ generators in $H'$ that are skew to $\pi'$, of which $q^{(n-1)^{2}}$ pass through a fixed point of $H'\setminus\pi'$. Hence, the blocking set $S'$ contains at least $q^{2n-1}$ points not in $\pi'$. Counting the tuples $(P,Q)$, $P\in\pi\cap S$, $Q\in S\setminus\pi$, with $PQ\subset\mathcal{H}(2n+1,q^{2})$, in two ways we find
  \[
    xq^{2n-1}\leq\delta q^{2n-1}\theta_{n-1}(q^{2})\;,
  \]
  where the upper bound follows from the fact that every point $Q\in S\setminus\pi$ is collinear with the points of an $(n-1)$-space in $\pi$ and not with the other points in $\pi$. The theorem follows immediately.
\end{proof}

Note that the size of a blocking set on a Hermitian variety $\mathcal{H}(2n+1,q^{2})$ is at least the size of an ovoid, hence at least $q^{2n+1}+1$.
\par Recall that the symmetric difference $A\Delta B$ of two sets $A$ and $B$ is the set $(A\cup B)\setminus(A\cap B)$. 

\begin{lemma}\label{subspace}
  Let $p$ be a fixed prime and denote $q=p^h$, $h\in\N$. Let $c\in C_{n}(\mathcal{H}(2n+1,q^{2}))^{\bot}$ be a code word with $\wt(c)\leq w=\delta q^{2n-1}$, $\delta>0$ a constant, and denote $\supp(c)=S$. Denote $\mathcal{H}(2n+1,q^{2})$ by $H$ and let $\sigma$ be the polarity related to $H$. Then a constant $C_{n}>0$, a value $Q>0$ and an $n$-space $\pi$ can be found such that $|(\pi\Delta\pi^{\sigma})\cap S|> C_{n} q^{2n-1}$ and such that $\frac{p-1}{p}|(\pi\Delta\pi^{\sigma})\cap H|<\Sigma_{n,i}-C_{n}q^{2n-1}$, if $q\geq Q$. Hereby, $i$ is such that $\pi\cap H$ is a cone with an $i$-dimensional vertex and $i\leq n-2$.
\end{lemma}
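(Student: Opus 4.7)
Condition (ii) follows directly from the shape of $\Sigma_{n,i}$. Setting $W_{n,i}:=|(\pi\Delta\pi^{\sigma})\cap H|=2q^{2i+2}\mu_{n-i-1}(q^{2})$, the note after the definition gives $\Sigma_{n,i}=W_{n,i}+f_{n,i}$ with $f_{n,i}>0$, and $W_{n,i}\sim 2q^{2n-1}$ uniformly for $-1\le i\le n-2$. Rearranging (ii) as
\[
  C_{n}q^{2n-1}\;<\;\tfrac{1}{p}W_{n,i}+f_{n,i},
\]
the right-hand side is at least $\tfrac{2}{p}q^{2n-1}(1-o(1))$, so any $C_{n}<2/p$ makes (ii) hold for $q$ large, no matter which admissible $\pi$ is eventually produced. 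Hence it suffices to construct a $\pi$ satisfying $|(\pi\Delta\pi^{\sigma})\cap S|>C_{n}q^{2n-1}$.

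My plan for (i) is a double-counting argument driven by Theorem \ref{pperpcapS}. That theorem gives $|P^{\sigma}\cap S|\ge 2+q^{2n-1}$ for every $P\in S$, and because $S\subseteq H$, every $Q\in(P^{\sigma}\cap S)\setminus\{P\}$ is collinear with $P$ via a line of $H$. Summing over $P$ yields at least $|S|q^{2n-1}$ ordered pairs $(P,Q)\in S^{2}$ with $P\neq Q$ and $PQ\subseteq H$. For any such collinear pair, the number of $n$-spaces $\pi$ with $P\in\pi\setminus\pi^{\sigma}$ and $Q\in\pi^{\sigma}\setminus\pi$ is a fixed constant (by transitivity of the unitary group on collinear $H$-pairs), which an inclusion-exclusion on Gaussian binomials identifies as
\[
  N_{*}\;=\;\binom{2n}{n}_{q^{2}}-\binom{2n-1}{n}_{q^{2}}-\binom{2n-1}{n-1}_{q^{2}}+\binom{2n-2}{n-1}_{q^{2}}.
\]
Writing $T'(\pi)=(\pi\setminus\pi^{\sigma})\cap S$ and $T''(\pi)=(\pi^{\sigma}\setminus\pi)\cap S$, double-counting the triples $(P,Q,\pi)$ then gives
\[
  \sum_{\pi}|T'(\pi)|\cdot|T''(\pi)|\;\ge\;|S|\,q^{2n-1}\,N_{*}.
\]

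The serious obstacle is extracting a single $\pi$ in which the product $|T'(\pi)||T''(\pi)|$ is of order $q^{4n-2}$: a uniform average over all $n$-spaces is too weak by a factor of roughly $q^{2n+2}$. My plan is therefore to restrict the averaging to the much smaller family $\mathcal{F}$ of $\pi$'s through a pre-selected rich pair. Fix $P_{0}\in S$, use Theorem \ref{pperpcapS} to pick $Q_{0}\in(P_{0}^{\sigma}\cap S)\setminus\{P_{0}\}$, and let $\mathcal{F}$ consist of those $\pi$ with $P_{0}\in\pi\setminus\pi^{\sigma}$ and $Q_{0}\in\pi^{\sigma}\setminus\pi$. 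Applying Theorem \ref{pperpcapS} once more at $P_{0}$ and $Q_{0}$ produces many further collinear $S$-pairs $(P,Q)$ with $P\in Q_{0}^{\sigma}$ and $Q\in P_{0}^{\sigma}$, and a Gaussian-binomial inclusion-exclusion analogous to the computation of $N_{*}$ shows that the average of $|T'(\pi)||T''(\pi)|$ over $\pi\in\mathcal{F}$ is of order $q^{4n-2}$. Pigeonhole supplies a $\pi\in\mathcal{F}$ with the product of this order, and AM--GM closes the argument:
\[
  |(\pi\Delta\pi^{\sigma})\cap S|=|T'(\pi)|+|T''(\pi)|\;\ge\;2\sqrt{|T'(\pi)|\,|T''(\pi)|}\;\ge\;C_{n}q^{2n-1}.
\]

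The parameter constraint $i\le n-2$ comes for free: by Example \ref{possibilities}, in the degenerate cases $i\in\{n-1,n\}$ the vector $v_{\pi}-v_{\pi^{\sigma}}$ is zero and $(\pi\Delta\pi^{\sigma})\cap H=\emptyset$, so $|T'(\pi)|=|T''(\pi)|=0$ and such $\pi$'s contribute nothing to the double sum. The only real technical hurdle will be making the restricted Gaussian-binomial bookkeeping precise enough to yield an explicit positive $C_{n}$ depending only on $n$ and $p$, and then choosing $Q$ large enough that the lower-order corrections absorbed in $o(1)$ above and in the evaluation of $N_{*}$ do not eat up the constant.
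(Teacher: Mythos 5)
Your reduction of the second condition to \(C_{n}<2/p\) is sound and matches what the paper does (there \(C_{n}=\overline{K}-\epsilon\) with \(\epsilon>\overline{K}-\tfrac{2}{p}\), forcing exactly this constraint). The gap is in the first condition, and it is fatal: the claim that the average of \(|T'(\pi)|\,|T''(\pi)|\) over the family \(\mathcal{F}\) of \(n\)-spaces through the single rich pair \((P_{0},Q_{0})\) is of order \(q^{4n-2}\) cannot be true. Count the triples \((P,Q,\pi)\) directly: \(|\mathcal{F}|\) is of order \(q^{2n^{2}}\) (it is essentially the number of \((n-1)\)-spaces in a \((2n-1)\)-dimensional projective space over \(\F_{q^{2}}\)), while a further pair \((P,Q)\in S^{2}\) with \(P\neq P_{0}\), \(Q\neq Q_{0}\) lies in at most order \(q^{2(n-1)^{2}}\) members of \(\mathcal{F}\) (those containing the line \(P_{0}P\) and contained in \(Q_{0}^{\sigma}\cap Q^{\sigma}\)). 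Since \(|S|^{2}\leq\delta^{2}q^{4n-2}\), the total contribution of all such pairs to \(\sum_{\pi\in\mathcal{F}}|T'(\pi)||T''(\pi)|\) is at most of order \(q^{4n-2+2(n-1)^{2}}=q^{2n^{2}}\), i.e. \(O(1)\) per member of \(\mathcal{F}\); the boundary pairs involving \(P_{0}\) or \(Q_{0}\) contribute even less per capita, and the pair \((P_{0},Q_{0})\) itself contributes exactly \(1\) to the average. So the average product over \(\mathcal{F}\) is \(O(1)\), not \(q^{4n-2}\), and pigeonhole extracts nothing. The structural reason is that an \(n\)-space is pinned down by \(n+1\) points in general position, not by \(2\); conditioning on one pair leaves \(q^{2n^{2}}\) degrees of freedom, far too many for a set of only \(\delta q^{2n-1}\) points to saturate on average.

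What is missing is precisely the hard core of the paper's argument: one must condition on \(n+1\) points of \(S\) at once and prove that the \((n+1)\)-fold intersection \(P_{1}^{\sigma}\cap\cdots\cap P_{n+1}^{\sigma}\cap S\) can be made to contain \(\Omega(q^{2n-1})\) points. The paper achieves this by choosing the \(P_{j}\) from a \emph{semi-arc} inside \(S\) (guaranteeing the intersection of the perps is an honest \(n\)-space \(\pi\)) and running an inductive inclusion--exclusion on the alternating sums \(\sum|P_{i_{0}}^{\sigma}\cap\cdots\cap P_{i_{t}}^{\sigma}\cap S|\), bootstrapping Theorem \ref{pperpcapS} from \(t=0\) up to \(t=n\); the alternation between the upper and lower Bonferroni inequalities is what lets the dominant term survive. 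Your ``Gaussian-binomial inclusion--exclusion'' only counts subspaces and gives no lower bound on how \(S\) meets iterated perps, so it cannot substitute for this step. (Your observations that the degenerate cases \(i\in\{n-1,n\}\) contribute nothing, and that collinear \(S\)-pairs are plentiful, are both correct but peripheral; the paper instead excludes \(i\geq n-1\) via Lemma \ref{isotrope}, since its \(\pi\) is produced abstractly and one must still rule out that \(\pi\cap H\) sits inside a generator.)
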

\begin{proof}
  We introduce the notion of a {\it semi-arc}. A semi-arc $\mathcal{A}$ is a set of $k\geq n$ points in $\PG(2n+1,q^{2})$ such that no $n+1$ points of $\mathcal{A}$ are contained in an $(n-1)$-space. We make two remarks about these semi-arcs. First, if $|S|>\binom{k}{n}\theta_{n-1}(q^{2})$, then $S$ contains a semi-arc with $k+1$ points, since it is possible to construct the semi-arc point by point: we start with a set of $n$ linearly independent points in $S$ and we extend the semi-arc point by point until we have $k+1$ points, which is possible by the condition on $S$. Secondly, if we choose $K$ points $\{P_{1},\dots,P_{K}\}$ in a semi-arc $\mathcal{A}\subseteq S$, then
  \begin{multline}\label{eq1}
    \sum_{\{i\}\in S_{K,1}}|P^{\sigma}_{i}\cap S|-\sum_{\{i,j\}\in S_{K,2}}|P^{\sigma}_{i}\cap P^{\sigma}_{j}\cap S|+\dots  \\+\sum_{\{i_{1},\dots,i_{2l+1}\}\in S_{K,2l+1}}|P^{\sigma}_{i_{1}}\cap P^{\sigma}_{i_{2}}\cap\dots\cap P^{\sigma}_{i_{2l+1}}\cap S|\geq|(P^{\sigma}_{1}\cup P^{\sigma}_{2}\cup\dots\cup P^{\sigma}_{K})\cap S|\, ,
  \end{multline}
  since every point of $(P^{\sigma}_{1}\cup P^{\sigma}_{2}\cup\dots\cup P^{\sigma}_{K})\cap S$ is counted at least once on the left hand side. Also
  \begin{multline}\label{eq2}
    \sum_{\{i\}\in S_{K,1}}|P^{\sigma}_{i}\cap S|-\sum_{\{i,j\}\in S_{K,2}}|P^{\sigma}_{i}\cap P^{\sigma}_{j}\cap S|+\dots  \\-\sum_{\{i_{1},\dots,i_{2l}\}\in S_{K,2l}}|P^{\sigma}_{i_{1}}\cap P^{\sigma}_{i_{2}}\cap\dots\cap P^{\sigma}_{i_{2l}}\cap S|\leq|(P^{\sigma}_{1}\cup P^{\sigma}_{2}\cup\dots\cup P^{\sigma}_{K})\cap S|\, ,
  \end{multline}
  since every point of $(P^{\sigma}_{1}\cup P^{\sigma}_{2}\cup\dots\cup P^{\sigma}_{K})\cap S$ is counted at most once on the left hand side. In both expressions we denoted the set of all subsets of $\{1,\dots,K\}$ of size $j$ by $S_{K,j}$.
  \par Now, we prove using induction, for every $0\leq t\leq n$, that we can find for any $(t+1)$-tuple $(c_{0},\dots,c_{t})$, $c_{j}>0$ a constant (independent of $q$), a constant $K_{t}\in\N$ such that 
  \[
    \forall K\geq K_{t}, \forall \{P_{1},\dots, P_{K}\}\subseteq\mathcal{A}\subseteq S: \sum_{\{i_{0},\dots,i_{t}\}\in S_{K,t+1}}|P^{\sigma}_{i_{0}}\cap P^{\sigma}_{i_{1}}\cap\dots\cap P^{\sigma}_{i_{t}}\cap S|\geq c_{t}q^{2n-1}.
  \]  
  We consider the case $t=0$, the induction base. Let $\{P_{1},\dots, P_{K}\}$ be a set of points in $\mathcal{A}\subseteq S$ (without restriction on $K$). By Theorem \ref{pperpcapS}, we know
  \[
    \sum^{K}_{i=1}|P^{\sigma}_{i}\cap S|\geq Kq^{2n-1}\,.
  \]
  Hence, it is sufficient to choose $K_{0}=\left\lceil c_{0}\right\rceil$.
  \par Next, we prove the induction step. We distinguish between two cases: $t$ even and $t$ odd. We look at the former, so we assume the inequality to be proven for $t\leq 2l-1$ and we prove it for $t=2l$. Let $K_{m}$ be the constant arising from the $(m+1)$-tuple $(c_{0},\ldots,c_{m})$, $m<2l$, and let $\{P_{1},\dots, P_{K}\}$ be a set of points in $\mathcal{A}\subseteq S$ with $K\geq K_{2l-1}$. By \eqref{eq1}, we know that
  \begin{multline*}
    \sum_{\{i\}\in S_{K,1}}|P^{\sigma}_{i}\cap S|-\sum_{\{i,j\}\in S_{K,2}}|P^{\sigma}_{i}\cap P^{\sigma}_{j}\cap S|+\dots  \\+\sum_{\{i_{0},\dots,i_{2l}\}\in S_{K,2l+1}}|P^{\sigma}_{i_{0}}\cap P^{\sigma}_{i_{1}}\cap\dots\cap P^{\sigma}_{i_{2l}}\cap S|\geq|(P^{\sigma}_{1}\cup P^{\sigma}_{2}\cup\dots\cup P^{\sigma}_{K})\cap S|\, .
  \end{multline*}
  Using the induction hypothesis and Theorem \ref{pperpcapS}, we find
  \begin{align*}
    \sum_{\{i_{0},\dots,i_{2l}\}\in S_{K,2l+1}}|P^{\sigma}_{i_{0}}\cap P^{\sigma}_{i_{1}}\cap\dots\cap P^{\sigma}_{i_{2l}}\cap S|&\geq \frac{\binom{K}{K_{2l-1}}}{\binom{K-2l}{K_{2l-1}-2l}}c_{2l-1}q^{2n-1}+\frac{\binom{K}{K_{2l-3}}}{\binom{K-2l+2}{K_{2l-3}-2l+2}}c_{2l-3}q^{2n-1}\\&\qquad\qquad+\dots+\frac{\binom{K}{K_{1}}}{\binom{K-2}{K_{1}-2}}c_{1}q^{2n-1}\\ &\quad-\left[\binom{K}{2l-1}+\binom{K}{2l-3}+\dots+K\right]\delta q^{2n-1}\\&\quad+q^{2n-1}
  \end{align*}
  and thus
  \begin{multline*}
    \sum_{\{i_{0},\dots,i_{2l}\}\in S_{K,2l+1}}|P^{\sigma}_{i_{0}}\cap P^{\sigma}_{i_{1}}\cap\dots\cap P^{\sigma}_{i_{2l}}\cap S| \\
    \geq \frac{\binom{K}{2l}}{\binom{K_{2l-1}}{2l}}c_{2l-1}q^{2n-1}+\frac{\binom{K}{2l-2}}{\binom{K_{2l-3}}{2l-2}}c_{2l-3}q^{2n-1}+\dots+\frac{\binom{K}{2}}{\binom{K_{1}}{2}}c_{1}q^{2n-1} \\
    -\left[\binom{K}{2l-1}+\binom{K}{2l-3}+\dots+K\right]\delta q^{2n-1}+q^{2n-1}\\=q^{2n-1}f(K,\delta,l,K_{1},K_{3},\ldots,K_{2l-1},c_{1},c_{3},\ldots,c_{2l-1}).
  \end{multline*}
  Note that $\frac{\binom{K}{K_{2i-1}}}{\binom{K-2i}{K_{2i-1}-2i}}=\frac{\binom{K}{2i}}{\binom{K_{2i-1}}{2i}}$. We now study the function $f$, which is clearly independent of $q$. Considering $f$ as a function of $K$ and comparing the exponents, we see that the term $\frac{\binom{K}{2l}}{\binom{K_{2l-1}}{2l}}c_{2l-1}$ dominates the others. Hence, we can find a value $K_{2l}\geq K_{2l-1}$ such that the right hand side is at least $c_{2l}q^{2n-1}$ for all $K\geq K_{2l}$, with $c_{2l}$ as chosen above. Then the statement follows. Note that $K_{2l}$ depends on the parameters $l,c_{1},\ldots,c_{2l}$ chosen before (the values $K_{i}$, $0\leq i<2l$, depend themselves on $i,c_{1},\ldots,c_{i}$).
  \par For the latter case, $t$ odd, the argument is similar, in this case starting from \eqref{eq2}.
  \par We will now apply the previous result for $t=n$. In order to do this, we need a semi-arc containing at least $K_{n}$ points. We argued in the beginning of the proof that $\delta q^{2n-1}=|S|>\binom{K_{n}-1}{n}\theta_{n-1}(q^{2})$ is a sufficient condition. Since $K_{n}$ is a constant, independent of $q$, and $\theta_{n-1}(q^{2})=q^{2n-2}+q^{2n-4}+\dots+q^{2}+1$, we can find $Q'_{1}>0$ such that this inequality is true for all $q\geq Q'_{1}$. Then we know
  \[
    \sum_{\{i_{0},\dots,i_{n}\}\in S_{K_{n},n+1}}|P^{\sigma}_{i_{0}}\cap P^{\sigma}_{i_{1}}\cap\dots\cap P^{\sigma}_{i_{n}}\cap S|\geq c_{n}q^{2n-1}
  \]
  for the points $\{P_{1},P_{2},\dots,P_{K_{n}}\}$ defining a semi-arc in $S$. Hence, we can find $n+1$ points - without loss of generality the points $\{P_{1},\dots,P_{n+1}\}$ - such that
  \[
    |P^{\sigma}_{1}\cap P^{\sigma}_{2}\cap\dots\cap P^{\sigma}_{n+1}\cap S|\geq\frac{c_{n}}{\binom{K_{n}}{n+1}}q^{2n-1}.
  \]
  We can find a constant $\overline{K}>0$ and a value $Q'\geq Q'_{1}$ such that $\frac{c_{n}}{\binom{K_{n}}{n+1}}q^{2n-1}\geq\overline{K}q^{2n-1}+\theta_{n-2}(q^{2})$ for $q\geq Q'$. We write $C_{n}=\overline{K}-\epsilon$, 
  $\max\{0,\overline{K}-\frac{2}{p}\}<\epsilon<\overline{K}$, and we denote the $n$-space $P^{\sigma}_{1}\cap P^{\sigma}_{2}\cap\dots\cap P^{\sigma}_{n+1}$ by $\pi$. Note that $\pi$ is an $n$-space since the points $P_{1},P_{2},\dots,P_{n+1}$ belong to a semi-arc. Then $|\pi\cap S|> C_{n} q^{2n-1}+\theta_{n-2}(q^{2})$.
  \par We know the intersection $\pi\cap H$ can be written as $\pi_{i}H_{n-i-1}$, with $H_{n-i-1}\cong\mathcal{H}(n-i-1,q^{2})$ and $\pi_{i}$ an $i$-space, $-1\leq i\leq n$. Let $Q''\geq Q'$ be such that $C_{n}q^{2n-1}+\theta_{n-2}(q^{2})>\delta\theta_{n-1}(q^{2})$ for all $q\geq Q''$. Such a value exists since the first term on the left hand side dominates the right hand side. If $i\geq n-1$, then $\pi\cap H$ is contained in a generator of $H$. Thus, using Lemma \ref{isotrope} and the assumption $q\geq Q''$ we find a contradiction. Hence, $i\leq n-2$. We find:
  \[
    |(\pi\Delta\pi^{\sigma})\cap S|\geq|(\pi\setminus\pi_{i})\cap S|\geq C_{n}q^{2n-1}+\theta_{n-2}(q^{2})-\theta_{i}(q^{2})\geq C_{n}q^{2n-1}\;.
  \]
  \par We still need to check the second claim in the statement of the lemma: $\frac{p-1}{p}|(\pi\Delta\pi^{\sigma})\cap H|<\Sigma_{n,i}-C_{n}q^{2n-1}$. Looking at the terms of highest degree in $\Sigma_{n,i}-C_{n}q^{2n-1}-\frac{p-1}{p}|(\pi\Delta\pi^{\sigma})\cap H|$, we find $2-C_{n}-2\frac{p-1}{p}=\epsilon-\frac{c_{n}}{\binom{K_{n}}{n+1}}+\frac{2}{p}>0$. Hence, we can find $Q\geq Q''$ such that the inequality $\frac{p-1}{p}|(\pi\Delta\pi^{\sigma})\cap H|<\Sigma_{n,i}-C_{n}q^{2n-1}$ holds for all $q\geq Q$.
\end{proof}

In this proof $\frac{c_{n}}{\binom{K_{n}}{n+1}}$ depends also on the choice of $c_{0},\dots,c_{n-1}$. So, investigating the possible values for $c_{0},\dots,c_{n}$, we can find many different values for $C_{n}$. With each of these values, a value $Q$ corresponds. We pick one of the possible values for $C_{n}$. By investigating different possibilities for $C_{n}$, we can see there is a trade-off between the choice of $C_{n}$ and the corresponding value $Q$.
\par From now on, we consider $C_{n}$ and the corresponding value $Q$ to be fixed. 

\begin{lemma}\label{baselemma}
  Let $c\in C_{n}(\mathcal{H}(2n+1,q^{2}))^{\bot}$ be a code word with $\wt(c)\leq w=\delta q^{2n-1}$, $\delta>0$ a constant, and denote $\supp(c)=S$. Consider $H\cong\mathcal{H}(2n+1,q^{2})$. Let $\pi$ be an $n$-space such that $\pi\cap H$ is a cone $\pi_{i}H_{n-i-1}$ with $H_{n-i-1}\cong\mathcal{H}(n-i-1,q^{2})$. Assume that $|S\cap(\pi\setminus\pi_{i})|=x$ and $|S\cap(\pi^{\sigma}\setminus\pi_{i})|=t$. Then there exists a value $Q_{n,i}\geq 0$ such that $x+t\leq C_{n}q^{2n-1}$ or $x+t\geq \Sigma_{n,i}-C_{n}q^{2n-1}$ if $q\geq Q_{n,i}$.
\end{lemma}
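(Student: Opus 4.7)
The plan is to prove the dichotomy through a double counting argument involving the family $\mathcal{M}$ of generators $\mu$ of $H$ that meet $\pi$ in a single point; by Lemma~\ref{intersectgenerator} each such $\mu$ then also meets $\pi^{\sigma}$ in a single point, and both intersection points lie outside $\pi_{i}=\pi\cap\pi^{\sigma}$. By Lemma~\ref{NlinkedN'}, exactly $N(n,i)$ members of $\mathcal{M}$ pass through any prescribed pair $(P,P')\in((\pi\setminus\pi_{i})\cap H)\times((\pi^{\sigma}\setminus\pi_{i})\cap H)$.

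Write $S_{\pi}:=S\cap(\pi\setminus\pi_{i})$ and $S_{\pi^{\sigma}}:=S\cap(\pi^{\sigma}\setminus\pi_{i})$, so $|S_{\pi}|=x$ and $|S_{\pi^{\sigma}}|=t$. Summing the codeword relation $\sum_{Q\in\mu\cap S}c_{Q}\equiv 0\pmod{p}$ over $\mu\in\mathcal{M}$ and interchanging the order of summation gives
\[
  \sum_{Q\in S}\alpha_{Q}\,c_{Q}\equiv 0\pmod{p},\qquad\alpha_{Q}:=|\{\mu\in\mathcal{M}:Q\in\mu\}|.
\]
The coefficients $\alpha_{Q}$ have an explicit closed form: $\alpha_{Q}=K:=q^{2i+2}\mu_{n-i-1}(q^{2})\,N(n,i)$ for $Q\in((\pi\cup\pi^{\sigma})\setminus\pi_{i})\cap H$, $\alpha_{Q}=0$ for $Q\in\pi_{i}$, and $\alpha_{Q}=n_{Q}(n,i)$ for $Q\notin\pi\cup\pi^{\sigma}$, where $n_{Q}(n,i)$ takes one of the four values given by Lemma~\ref{generatorsthroughpoint}. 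In parallel I will perform a refined triple count of $(P,P',\mu)\in S_{\pi}\times S_{\pi^{\sigma}}\times\mathcal{M}$ with $\mu\supseteq\langle P,P'\rangle$ (a line of $H$ by Remark~\ref{collinear}), obtaining $xt\cdot N(n,i)$ on one hand and a quantity expressible through the same ingredients on the other.

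Combining these two identities with the weight bound $|S|\le\delta q^{2n-1}$ and the generator bound of Lemma~\ref{isotrope} (which forbids too many points of $S$ from lying in a single generator), I expect to extract an inequality of the schematic form
\[
  (x+t)\bigl(\Sigma_{n,i}-(x+t)\bigr)\le E(q),
\]
where $E(q)$ is of strictly lower order than $(\Sigma_{n,i}/2)^{2}$, the maximum of the left hand side. Since $y\mapsto y(\Sigma_{n,i}-y)$ is concave, this forces $x+t$ to lie either near $0$ or near $\Sigma_{n,i}$, and a careful accounting of constants yields the stated gap $(C_{n}q^{2n-1},\Sigma_{n,i}-C_{n}q^{2n-1})$ for all $q\ge Q_{n,i}$.

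The main obstacle lies in the intricate bookkeeping coming from Lemma~\ref{generatorsthroughpoint}: the value $n_{Q}(n,i)$ depends on whether $Q\in\pi^{\sigma}_{i}$ and, in that case, whether $Q$ lies on a line of $H$ joining $\pi\setminus\pi_{i}$ to $\pi^{\sigma}\setminus\pi_{i}$. This four-way case split is precisely what drives the two branches of $\Sigma_{n,i}$ (according to the parity of $n-i$), and matching them up correctly is the delicate part of the argument. A secondary subtlety is to align the gap coming out of the inequality with the constant $C_{n}$ fixed after Lemma~\ref{subspace}, which pins down the admissible threshold $Q_{n,i}$.
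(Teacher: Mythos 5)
Your target inequality $(x+t)\bigl(\Sigma_{n,i}-(x+t)\bigr)\le E(q)$ with $E(q)$ of lower order than $q^{4n-2}$ is exactly the right shape (it is equivalent to the quadratic in $y=x+t$ that the paper derives), and you have assembled the right ingredients: $N(n,i)$, the four values $n_{Q}(n,i)$ of Lemma \ref{generatorsthroughpoint}, and the weight bound. But the mechanism you propose for reaching it does not work. Summing the codeword relations over $\mathcal{M}$ yields only a congruence $\sum_{Q}\alpha_{Q}c_{Q}\equiv 0\pmod p$ in which the large integer coefficients $\alpha_{Q}$ must be reduced mod $p$; such a congruence carries essentially no quantitative information and cannot produce a cardinality inequality. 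Likewise your triple count of $(P,P',\mu)$ with both $P$ and $P'$ in $S$ gives $xtN(n,i)$ on one side but nothing concrete on the other, and Lemma \ref{isotrope} plays no role in this lemma. The missing idea is to use the codeword condition locally and asymmetrically: fix $P\in S\cap(\pi\setminus\pi_{i})$ and a point $P'\in\bigl(H\cap(\pi^{\sigma}\setminus\pi_{i})\bigr)\setminus S$, i.e.\ a \emph{non-support} point. Each of the $N(n,i)$ generators through $P$ and $P'$ meeting $\pi\cup\pi^{\sigma}$ in exactly $\{P,P'\}$ carries $c_{P}\neq0$ and $c_{P'}=0$, hence must contain a further point of $S$, necessarily off $\pi\cup\pi^{\sigma}$. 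Counting these $x\,(q^{2i+2}\mu_{n-i-1}(q^{2})-t)\,N(n,i)$ incidences against the at most $n_{\max}(n,i)$ relevant generators through any fixed point of $S\setminus(\pi\cup\pi^{\sigma})$ bounds $|S\setminus(\pi\cup\pi^{\sigma})|$ from below; symmetrizing in $\pi$ and $\pi^{\sigma}$, adding $x+t$, comparing with $|S|\le\delta q^{2n-1}$ and using $2xt\le\tfrac{1}{2}(x+t)^{2}$ gives precisely the concave quadratic whose root sum is $\Sigma_{n,i}$. It is the pairing of support points with \emph{non-support} points that injects the factor $(q^{2i+2}\mu_{n-i-1}(q^{2})-t)$ and hence the indispensable $-xt$ term; nothing in a global weighted congruence does that.

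Two further points you flag but do not resolve are genuinely load-bearing. First, $n_{\max}(n,i)$ must be identified among the four cases of Lemma \ref{generatorsthroughpoint}; which case dominates depends on the parity of $n-i$ (with $i=n-2$ and $i=n-3$ needing separate treatment), and this comparison is exactly what produces the two branches of $\Sigma_{n,i}$. Second, ``a careful accounting of constants'' must actually verify that the smaller root $\alpha_{n,i}(q^{2})$ of the quadratic is $O(q^{2n-2})$, so that it drops below $C_{n}q^{2n-1}$ for $q\ge Q_{n,i}$; this rests on $n_{\max}(n,i)/N(n,i)=O(q^{2n-2})$ and hence $E(q)=O(q^{4n-3})$. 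Finally, a small definitional slip: a generator can meet $\pi$ in a single point lying inside $\pi_{i}$ (in which case it meets $\pi^{\sigma}$ in that same point), so $\mathcal{M}$ should be defined as the generators meeting $\pi$ in exactly one point outside $\pi_{i}$.
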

\begin{proof}
  Let $P$ be a point of $S\cap (\pi\setminus\pi_{i})$ and let $P'$ be a point of $((\pi^{\sigma}\cap H)\setminus\pi_{i})\backslash S$ and denote $\ell=PP'$. By Lemma \ref{formuleN} we know the number $N(n,i)$ of generators through $\ell$ intersecting $\pi$ and $\pi^{\sigma}$ in precisely one point, namely $P$ and $P'$. Each of these generators contains an additional point of $S$. Let $R$ be a point of $H\backslash(\pi\cup\pi^{\sigma})$. By Lemma \ref{generatorsthroughpoint} we know the number $n_{R}(n,i)$ of generators through $R$ intersecting both $\pi$ and $\pi^{\sigma}$ in a point. Hence, $S\setminus(\pi\cup\pi^{\sigma})$ contains at least
  \[
    x(|(\pi^{\sigma}\cap H)\setminus\pi_{i}|-t)\frac{N(n,i)}{n_{\max}(n,i)}=x(q^{2i+2}\mu_{n-i-1}(q^{2})-t)\frac{N(n,i)}{n_{\max}(n,i)}
  \]
  points, whereby $n_{\max}(n,i)=\max_{R\in S\setminus(\pi\cup\pi^{\sigma})}n_{R}(n,i)$. Switching the roles of $\pi$ and $\pi^{\sigma}$, and adding these two inequalities, we find after dividing by two
  \[
    x(q^{2i+2}\mu_{n-i-1}(q^{2})-t)\frac{N(n,i)}{2n_{\max}(n,i)}+t(q^{2i+2}\mu_{n-i-1}(q^{2})-x)\frac{N(n,i)}{2n_{\max}(n,i)}+x+t\leq |S|\leq w\;.
  \]
  Rewriting this inequality yields
  \[
    (x+t)\left(q^{2i+2}\mu_{n-i-1}(q^{2})N(n,i)+2n_{\max}(n,i)\right)-2xtN(n,i)\leq 2w\,n_{\max}(n,i)\;.
  \]
  Using the inequality $2xt\leq\frac{1}{2}(x+t)^{2}$ and writing $y=x+t$, we find
  \[
    \frac{1}{2}y^{2}N(n,i)-\left[q^{2i+2}\mu_{n-i-1}(q^{2})N(n,i)+2n_{\max}(n,i)\right]y+2w\,n_{\max}(n,i)\geq0\;.
  \]
  \par We now distinguish between two cases: $n-i$ odd and $n-i$ even. First we look at the former. By detailed analysis one can see that in this case
  \begin{multline*}
    N(n-1,i)q^{4i+4}\left(\mu_{n-i-2}(q^{2})\right)^{2}\\ \geq N(n-1,i-1)q^{4i}\left(\mu_{n-i-1}(q^{2})\right)^{2}\\ \geq N(n-1,i+1)q^{4i+4}\mu_{n-i-3}(q^{2})\left[q^{4}\mu_{n-i-3}(q^{2})+q^{2}-1\right]
  \end{multline*}
  if $n-i>3$ and
  \[
    N(n-1,n-3)q^{4n-8}\left(q+1\right)^{2}\\ \geq N(n-1,n-4)q^{4n-12}\left(q^{3}+1\right)^{2}\\ \geq N(n,n-3)q^{2n-4}\,.
  \]
  These inequalities correspond to $i=n-3$. Hence, $n_{\max}(n,i)=N(n-1,i)q^{4i+4}\left(\mu_{n-i-2}(q^{2})\right)^{2}$. Using the formula for $N(n,i)$ from Lemma \ref{formuleN}, and simplifying, we can rewrite this inequality as
  \begin{multline}\label{eqodd}
    \frac{1}{2}q^{n-3i-5}y^2-\left[q^{n-i-3}\mu_{n-i-1}(q^{2})+2\mu_{n-i-2}(q^{2})\frac{q^{n-i-1}-1}{q^{2}-1}\right]y\\+2\delta q^{2n-1}\mu_{n-i-2}(q^{2})\frac{q^{n-i-1}-1}{q^{2}-1}\geq0\;.
  \end{multline}
  Let $\alpha_{n,i}(q^{2})$ and $\alpha'_{n,i}(q^{2})$ be the two solutions of the corresponding equation, with $\alpha_{n,i}(q^{2})\leq \alpha'_{n,i}(q^{2})$. Then $x+t\leq \alpha_{n,i}(q^{2})$ or $x+t\geq \alpha'_{n,i}(q^{2})$. Moreover,
  \[
    \alpha_{n,i}(q^{2})+\alpha'_{n,i}(q^{2})=2q^{2i+2}\mu_{n-i-1}(q^{2})+4\frac{\mu_{n-i-2}(q^{2})(q^{n-i-1}-1)}{q^{n-3i-5}(q^{2}-1)}=\Sigma_{n,i}\;.
  \]
  For the given $\delta$ we calculate 
  \begin{align*}
    \overline{\alpha_{n,i}}=\lim_{q\to\infty}\alpha_{n,i}(q^{2})=\lim_{q\to\infty}\frac{B'-\sqrt{B'^{2}-4\delta q^{3n-3i-6}C'}}{q^{n-3i-5}}\; ,
  \end{align*}
  with
  \begin{align*}
    B'&=q^{n-i-3}\mu_{n-i-1}(q^{2})+2\mu_{n-i-2}(q^{2})\frac{q^{n-i-1}-1}{q^{2}-1}\;,\\
    C'&=\mu_{n-i-2}(q^{2})\frac{q^{n-i-1}-1}{q^{2}-1}\;.
  \end{align*}
  Since $\overline{\alpha_{n,i}}\in O(q^{2n-2})$, we can find $Q_{n,i}>0$ such that $\alpha_{n,i}(q^{2})\leq C_{n}q^{2n-1}$ for $q\geq Q_{n,i}$.
  \par In the latter case, $n-i$ even, similar arguments can be used. However, in this case we need to distinguish between $n-i>2$ and $i=n-2$. First, we discuss $n-i>2$. We can deduce that
  \begin{multline*}
    N(n-1,i)q^{4i+4}\left(\mu_{n-i-2}(q^{2})\right)^{2}\\ \leq N(n-1,i-1)q^{4i}\left(\mu_{n-i-1}(q^{2})\right)^{2}\\ \leq N(n-1,i+1)q^{4i+4}\mu_{n-i-3}(q^{2})\left[q^{4}\mu_{n-i-3}(q^{2})+q^{2}-1\right]\, ,
  \end{multline*}
  hence $n_{\max}(n,i)=N(n-1,i+1)q^{4i+4}\mu_{n-i-3}(q^{2})\left[q^{4}\mu_{n-i-3}(q^{2})+q^{2}-1\right]$. We find the inequality
  \begin{multline}\label{eqeven}
    \frac{q^{2}-1}{2}y^{2}-q^{2i+2}\left[\mu_{n-i-1}(q^{2})(q^{2}-1)+2(q^{4}\mu_{n-i-3}(q^{2})+q^{2}-1)\right]y\\ +2\delta q^{2n-1}q^{2i+2}(q^{4}\mu_{n-i-3}(q^{2})+q^{2}-1)\geq0\;.
  \end{multline}
  Just as in the previous case, we define $\Sigma_{n,i}$, which is the sum of the solutions of the corresponding equation, and $\overline{\alpha_{n,i}}$:
  \begin{align*}
    \Sigma_{n,i}&=2q^{2i+2}\left[\mu_{n-i-1}(q^{2})+2\frac{q^{4}\mu_{n-i-3}(q^{2})+q^{2}-1}{q^{2}-1}\right]\;,\\
    \overline{\alpha_{n,i}}&=\lim_{q\to\infty}\frac{B''-\sqrt{B''^{2}-4\delta q^{2n-1}(q^{2}-1)C''}}{q^{2}-1}\; ,
  \end{align*}
  with
  \begin{align*}
    B''&=q^{2i+2}\left[\mu_{n-i-1}(q^{2})(q^{2}-1)+2(q^{4}\mu_{n-i-3}(q^{2})+q^{2}-1)\right]\; ,\\
    C''&=q^{2i+2}(q^{4}\mu_{n-i-3}(q^{2})+q^{2}-1)\; .
  \end{align*}
  Since $\overline{\alpha_{n,i}}\in O(q^{2n-2})$ also holds in this case, we again can find $Q_{n,i}>0$ such that $\alpha_{n,i}(q^{2})\leq C_{n}q^{2n-1}$ for $q\geq Q_{n,i}$.
  \par Finally, we consider the case $i=n-2$. The second possibility in Lemma \ref{generatorsthroughpoint} can thus not occur. We note that
  \[
    N(n-1,n-3)q^{4(n-2)}(q+1)^{2}\leq q^{2n-2}N(n,n-2)\,.
  \]
  The arguments in this case are analogous.
  \par Hence, in all cases we can find $Q_{n,i}>0$ such that $x+t\leq C_{n}q^{2n-1}$ or $x+t\geq\Sigma_{n,i}-C_{n}q^{2n-1}$ for $q\geq Q_{n,i}$. 
\end{proof}

Using the three previous lemmas, we can now prove a classification theorem for the small weight code words in $C_{n}(\mathcal{H}(2n+1,q^{2}))^{\bot}$.

\begin{theorem}\label{maintheorema}
  Let $p$ be a fixed prime, $\delta>0$ be a fixed constant and $n$ be a fixed positive integer. Then there is a constant $\overline{Q}$ such that, for any $q=p^{h}$ with $h\in\N$ and $q\ge\overline{Q}$, and any $c\in C_{n}(\mathcal{H}(2n+1,q^{2}))^{\bot}$ with $\wt(c)\leq w=\delta q^{2n-1}$, $c$ is a linear combination of code words described in Theorem \ref{codewords}.
\end{theorem}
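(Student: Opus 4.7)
The plan is to induct on the Hamming weight $\wt(c)$. The base case $\wt(c)=0$ is immediate, since the zero code word is the empty linear combination. For the inductive step, assume $c$ is a nonzero code word with $\wt(c)\leq\delta q^{2n-1}$, set $S=\supp(c)$, and let $\bar{Q}=\max\{Q,\max_{-1\leq i\leq n-2}Q_{n,i}\}$ so that both Lemma \ref{subspace} and Lemma \ref{baselemma} apply whenever $q\geq\bar{Q}$. Lemma \ref{subspace} produces an $n$-space $\pi$, with intersection parameter $i\leq n-2$, such that
\[
|(\pi\Delta\pi^{\sigma})\cap S|>C_{n}q^{2n-1}\qquad\text{and}\qquad\frac{p-1}{p}|(\pi\Delta\pi^{\sigma})\cap H|<\Sigma_{n,i}-C_{n}q^{2n-1}.
\]
Writing $x=|S\cap(\pi\setminus\pi_{i})|$ and $t=|S\cap(\pi^{\sigma}\setminus\pi_{i})|$, the first inequality gives $x+t>C_{n}q^{2n-1}$, so the dichotomy of Lemma \ref{baselemma} forces the other horn to hold, namely $x+t\geq\Sigma_{n,i}-C_{n}q^{2n-1}$. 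Combining this with the second inequality above yields $x+t>\frac{p-1}{p}|(\pi\Delta\pi^{\sigma})\cap H|$.

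Next I would run a pigeonhole argument on $\F_{p}^{*}$ to produce a scalar $\alpha\in\F_{p}^{*}$ for which the code word $c':=c-\alpha(v_{\pi}-v_{\pi^{\sigma}})$ has strictly smaller weight than $c$. Set $R_{1}=(\pi\cap H)\setminus\pi_{i}$ and $R_{2}=(\pi^{\sigma}\cap H)\setminus\pi_{i}$, so $|R_{1}|+|R_{2}|=|(\pi\Delta\pi^{\sigma})\cap H|$. For $\beta\in\F_{p}^{*}$ let $n_{\beta}=|\{P\in R_{1}:c_{P}=\beta\}|$ and $m_{\beta}=|\{P\in R_{2}:c_{P}=\beta\}|$; then $\sum_{\beta\neq 0}(n_{\beta}+m_{-\beta})=x+t$. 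Since $v_{\pi}-v_{\pi^{\sigma}}$ equals $+1$ on $R_{1}$, $-1$ on $R_{2}$, and $0$ elsewhere, a position-by-position comparison of $c$ and $c'$ shows
\[
\wt(c')-\wt(c)=|R_{1}|+|R_{2}|-x-t-(n_{\alpha}+m_{-\alpha}),
\]
as the only weight changes occur where $c_{P}=0$ becomes nonzero or where $c_{P}\in\{\alpha,-\alpha\}$ becomes zero. By averaging, some $\alpha\in\F_{p}^{*}$ satisfies $n_{\alpha}+m_{-\alpha}\geq(x+t)/(p-1)$, and the inequality $x+t>\frac{p-1}{p}(|R_{1}|+|R_{2}|)$ rearranges to $(x+t)/(p-1)>|R_{1}|+|R_{2}|-x-t$, hence $\wt(c')<\wt(c)$.

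Since $\wt(c')<\wt(c)\leq\delta q^{2n-1}$, the inductive hypothesis applies to $c'$, expressing it as a linear combination of code words described in Theorem \ref{codewords}; adding back $\alpha(v_{\pi}-v_{\pi^{\sigma}})$, which is itself such a code word by Theorem \ref{codewords}, yields the desired expression for $c$. The bulk of the technical work has been absorbed into Lemmas \ref{subspace} and \ref{baselemma}, so the main obstacle here is simply to align the pigeonhole threshold $\frac{p-1}{p}|(\pi\Delta\pi^{\sigma})\cap H|$ with the lower bound forced on $x+t$; this is precisely why the factor $\frac{p-1}{p}$ was built into the second claim of Lemma \ref{subspace} and why Lemma \ref{baselemma} exhibits a gap of width comparable to $\Sigma_{n,i}-2C_{n}q^{2n-1}$ between its two horns, so that the two statements snap together to close the induction.
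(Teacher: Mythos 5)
Your proposal is correct and follows essentially the same route as the paper: extract the $n$-space $\pi$ from Lemma \ref{subspace}, use the dichotomy of Lemma \ref{baselemma} to force $x+t\geq\Sigma_{n,i}-C_{n}q^{2n-1}>\frac{p-1}{p}|(\pi\Delta\pi^{\sigma})\cap H|$, pigeonhole over $\F_{p}^{*}$ to pick the scalar, subtract $\alpha(v_{\pi}-v_{\pi^{\sigma}})$, and induct on the weight. Your position-by-position bookkeeping of $\wt(c')-\wt(c)$ is in fact slightly more careful than the paper's own display, which omits the (unchanged) contribution of $S$ outside $\pi\Delta\pi^{\sigma}$ but reaches the same conclusion.
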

\begin{proof}
  For the given values $p$ and $\delta$ we have found a set of possible $C_{n}$-values, of which we have chosen one, in Lemma \ref{subspace},  with $Q$, a power of $p$, corresponding to it. By the proof of this lemma, we know that $C_{n}q^{2n-1}>\delta\theta_{n-1}(q^{2})$ for all $q\geq Q$. Define $\overline{Q}=\max(\{Q\}\cup\{Q_{n,i}\mid -1\leq i\leq n-2\})$, with $Q_{n,i}$ as in Lemma \ref{baselemma}, corresponding to the chosen value $C_{n}$. We assume $q\geq\overline{Q}$.
  \par  Denote $\supp(c)=S$. By Lemma \ref{subspace}, we find an $n$-space $\pi$ such that $N:=|(\pi\Delta\pi^{\sigma})\cap S|> C_{n} q^{2n-1}$. The intersection $\pi\cap H$ can be written as $\pi_{i}H_{n-i-1}$, with $H_{n-i-1}\cong\mathcal{H}(n-i-1,q^{2})$, $-1\leq i\leq n-2$.
  \par Since $N> C_{n} q^{2n-1}$ and $q\geq Q_{n,i}$, we know by Lemma \ref{baselemma} that $N\geq \Sigma_{n,i}-C_{n}q^{2n-1}$. For each element $\alpha\in\F^{*}_{p}$, we denote by $N_{\alpha}$ the sum of the number of points $P\in\pi$ such that $c_{P}=\alpha$ and the number of points $Q\in\pi^{\sigma}$ such that $c_{Q}=-\alpha$. We can find $\beta\in\F^{*}_{p}$ such that $N_{\beta}\geq \frac{N}{p-1}$. We now consider the code word $c'=c-\beta(v_{\pi}-v_{\pi^{\sigma}})$, with $v_{\pi}$ and $v_{\pi^{\sigma}}$ as in Theorem \ref{codewords}. We know
  \[
    \wt(c')=(N-N_{\beta})+(|(\pi\Delta\pi^{\sigma})\cap H|-N)=|(\pi\Delta\pi^{\sigma})\cap H|-N_{\beta}\leq |(\pi\Delta\pi^{\sigma})\cap H|-\frac{N}{p-1}\;.
  \]
  We also know that $N\geq \Sigma_{n,i}-C_{n}q^{2n-1}>\frac{p-1}{p}|(\pi\Delta\pi^{\sigma})\cap H|$ by Lemma \ref{subspace}. It follows that
  \[
    \wt(c')<\frac{p}{p-1}N-\frac{N}{p-1}=N\leq\wt(c)\;.
  \]
  Hence, the theorem follows using induction on $w=\wt(c)$.
\end{proof}

We now focus on the code words that we described in Section \ref{sec:codewords}.

\begin{remark}\label{maintheoremb}
  Let $c$ be a small weight code word and $q$ sufficiently large. Following the arguments in the proof of Theorem \ref{maintheorema}, we know that $c=c_{1}+\dots+c_{m}$, with $c_{i}$, $1\leq i\leq m$, a code word that we described in Theorem \ref{codewords} and Example \ref{possibilities}, such that $\wt(c_{1}+\dots+c_{m'})<\wt(c_{1}+\dots+c_{m'+1})$ for all $1\leq m'\leq m$. From this observation, it immediately follows that the code words that we described in Theorem \ref{codewords} and Example \ref{possibilities} are the code words of smallest weights.
  \par Now we consider small weight code words different from the ones described in Theorem \ref{codewords}. Let $c$ be a code word $c$ of weight smaller than $4q^{2n-1}$, $q$ sufficiently large. Since $c$ is not of the type we described in Theorem \ref{codewords}, $c$ can be written as a linear combination of at least two of these code words. By the above arguments, we can find a code word $c'$ which is a linear combination of precisely two of these code words, such that $\wt(c')\leq\wt(c)$. In particular, we can find $\alpha,\alpha'\in\F^{*}_{p}$ and $n$-spaces $\pi,\pi'$, $\pi\notin\{\pi',\pi'^{\sigma}\}$, such that $c'=\alpha(v_{\pi}-v_{\pi^{\sigma}})+\alpha'(v_{\pi'}-v_{\pi'^{\sigma}})$ and $\wt(c')<4q^{2n-1}$. Let $S$ be the support of $c'$. We know $S\subseteq((\pi\Delta\pi^{\sigma})\cup(\pi'\Delta\pi'^{\sigma}))\cap\mathcal{H}(2n+1,q^{2})$. However, it can be seen that $|(\pi\Delta\pi^{\sigma})\cap(\pi'\Delta\pi'^{\sigma})|\leq 4q^{2n-2}$. Hence,
  \[
    |S|\geq\wt(\alpha(v_{\pi}-v_{\pi^{\sigma}}))+\wt(\alpha'(v_{\pi'}-v_{\pi'^{\sigma}}))-|(\pi\Delta\pi^{\sigma})\cap(\pi'\Delta\pi'^{\sigma})|\geq 4q^{2n-1}\;,
  \]
  a contradiction. It follows that the only code words of weight smaller than $4q^{2n-1}$ are of the type described in Theorem \ref{codewords}.
\end{remark}

Note that Theorem \ref{maintheorema} only proves the second half of Theorem \ref{maintheorem}. From Remark \ref{maintheoremb} now the first half also follows.


\vspace{5mm} \textbf{Acknowledgment.} The authors wish to thank Leo Storme for his useful comments and suggestions on earlier drafts of the paper.


\begin{thebibliography}{99}
  \bibitem{ak} E.F. Assmus and J.D. Key. Designs and their Codes. \textit{Cambridge Tracts in Mathematics}, vol. 103, Cambridge University Press, Cambridge, 1992.
  \bibitem{dmm} S.V. Droms, K.E. Mellinger and C. Meyer. LDPC codes generated by conics in the classical projective plane. \textit{Des. Codes Cryptogr.}, 40(3): 343--356, 2006.
  \bibitem{fcvdt} Y. Fujiwara, D. Clark, P. Vandendriessche, M. De Boeck and V.D. Tonchev. Entanglement-assisted quantum low-density parity-check codes. \textit{Phys. Rev. A} 82(4): 042338, 2010. 
  \bibitem{ht} J.W.P. Hirschfeld and J.A. Thas. General Galois Geometries. \textit{Oxford Mathematical Monographs}, Oxford University Press, Oxford, 1991.
  \bibitem{kimms} J.-L. Kim, K. Mellinger and L. Storme. Small weight codewords in LDPC codes defined by (dual) classical generalised quadrangles. \textit{Des. Codes Cryptogr.}, 42(1): 73--92, 2007.
  \bibitem{kms} A. Klein, K. Metsch and L. Storme. Small maximal partial spreads in classical finite polar spaces. \textit{Adv. Geom.}, 10(3): 379--402, 2010.
  \bibitem{conm} M. Lavrauw, L. Storme and G. Van de Voorde. Linear codes from projective spaces. \textit{Error-Correcting Codes, Finite Geometries, and Cryptography}. A.A. Bruen and D.L. Wehlau, editors. \textit{AMS Contemporary Mathematics (CONM) book series}, vol. 523, 185--202, 2010.
  \bibitem{psv} V. Pepe, L. Storme and G. Van de Voorde. On codewords in the dual code of classical generalised quadrangles and classical polar spaces. \textit{Discrete Math.}, 310(22): 3132--3148, 2010.
  \bibitem{vdd1} P. Vandendriessche. Some low-density parity-check codes derived from finite geometries. \textit{Des. Codes. Cryptogr.}, 54: 287--297, 2010.
  \bibitem{vdd2} P. Vandendriessche. LDPC codes associated with linear representations of geometries. \textit{Adv. Math. Commun.}, 4: 405--417, 2010.
\end{thebibliography}
\end{document}